\newcommand{\ri}{\mathrm{i}}
\newcommand{\rme}{\mathrm{e}}
\newcommand{\rd}{\, \mathrm{d}}
\newcommand{\CC}{\mathbb{C}}
\newcommand{\NN}{\mathbb{N}}
\newcommand{\RR}{\mathbb{R}}
\newcommand{\Dcal}{\mathcal{D}}
\newcommand{\Ecal}{\mathcal{E}}
\newcommand{\rIm}{\mathrm{Im}}
\newcommand{\wor}{\mathrm{wor}}
\newcommand{\low}{\mathrm{low}}
\newcommand{\up}{\mathrm{up}}
\newcommand{\SE}{\mathrm{SE}}
\newcommand{\DE}{\mathrm{DE}}
\definecolor{darkblue}{RGB}{0,60,180} 
\definecolor{darkgreen}{RGB}{0,130,70}
\definecolor{darkorange}{RGB}{180,60,0}
\crefname{hypothesis}{Hypothesis}{Hypotheses}
\title{How sharp are error bounds?\\---lower bounds on quadrature worst-case errors for analytic functions---\thanks{All authors contributed equally to this work. Submitted to the editors DATE.
\funding{The work of T.~G.\ is supported by JSPS KAKENHI Grant Number 23K03210.}}
}
\author{Takashi Goda\thanks{Graduate School of Engineering, University of Tokyo, 7-3-1 Hongo, Bunkyo-ku, Tokyo 113-8656, Japan
(\email{goda@frcer.t.u-tokyo.ac.jp}).}
\and Yoshihito Kazashi\thanks{Department of Mathematics and Statistics, University of Strathclyde, 26 Richmond St., Glasgow G1 1XH, UK
(\email{y.kazashi@strath.ac.uk}).}
\and Ken'ichiro Tanaka\thanks{Department of Mathematical and Computing Science, School of Computing, Institute of Science Tokyo, 2-12-1 Ookayama, Meguro-ku, Tokyo 152-8550, Japan
(\email{kenichiro@c.titech.ac.jp}).}
}
\newcommand*{\addFileDependency}[1]{
  \typeout{(#1)}
  \@addtofilelist{#1}
  \IfFileExists{#1}{}{\typeout{No file #1.}}
}
\begin{document}

\maketitle
{\centering\small\textit{Dedicated to the memory of Professor Masaaki Sugihara}\par}

\begin{abstract}
  Numerical integration over the real line for analytic functions is studied. Our main focus is on the sharpness of the error bounds. We first derive two general lower estimates for the worst-case integration error, and then apply these to establish lower bounds for various quadrature rules. These bounds turn out to be either novel or improve upon existing results, leading to lower bounds that closely match upper bounds for various formulas. Specifically, for the suitably truncated trapezoidal rule, we improve upon general lower bounds on the worst-case error obtained by Sugihara [\textit{Numer. Math.}, 75 (1997), pp.~379--395] and provide exceptionally sharp lower bounds apart from a polynomial factor, and in particular show that the worst-case error for the trapezoidal rule by Sugihara is not improvable by more than a polynomial factor. 
  Additionally, our research reveals a discrepancy between the error decay of the trapezoidal rule and Sugihara's lower bound for general numerical integration rules, introducing a new open problem. Moreover, Gauss--Hermite quadrature is proven sub-optimal under the decay conditions on integrands we consider, a result not deducible from upper-bound arguments alone. Furthermore, to establish the near-optimality of the suitably scaled Gauss--Legendre and Clenshaw--Curtis quadratures, we generalize a recent result of Trefethen [\textit{SIAM Rev.}, 64 (2022), pp.~132--150] for the upper error bounds in terms of the decay conditions.
\end{abstract}

\begin{keywords}
  numerical integration, quadrature formula, worst-case error, analytic functions, Gauss quadrature, trapezoidal rule, Clenshaw--Curtis
\end{keywords}

\begin{AMS}
  41A55, 65D30, 65D32, 26E05, 46E15
\end{AMS}

\section{Introduction}\label{sec:intro}
Numerical integration lies at the heart of numerical analysis. 
Computing integral values of functions itself is a crucial task, for example when computing expected values of random variables;  numerical integration also serves as a foundation of other tasks in numerical analysis, such as time-marching schemes for differential equations, spatial integration when solving partial differential equations, and function approximations.

Quadrature formulas for one-dimensional integrals form the foundation of numerical integration. 
For multidimensional problems with moderate dimensions, direct product methods based on one-dimensional quadrature are often employed.
Computing one-dimensional integrals is of interest for its own sake: a large number of applications require one-dimensional integration as a subroutine: computation of matrix functions \cite{hale2008computing, TATSUOKA2020112396}, problems arising in physics \cite{Robert_TanhSinh}, and subroutines for one-dimensional integrals appearing in high-dimensional problems  \cite{griebel2014dimension,GKLS18,Nichols.J_Kuo_2014_POD}. 
We also refer to \cite[II. Applications]{Tre19} for other applications such as rational approximation, Laplace transform, and numerical solutions of partial differential equations. 
Furthermore, one-dimensional quadrature serves as a foundational element for developing and analyzing algorithms for high-dimensional problems \cite{Bungartz.H_Griebel_2004_sparse_grid_acta,DILP18,Dick.J_Kuo_Sloan_2013_ActaNumerica}.

Among the theories of quadrature, error analysis is of extreme practical importance; it enables practitioners to choose a numerical integration rule that is likely to yield a small error. 
Although the importance of error analysis is widely acknowledged, the sharpness of the error bounds appears to receive less attention. This is unfortunate, as less-than-optimal error analyses could misguide users. 
The aim of this paper is to study the sharpness of upper bounds by proving (matching) lower bounds---the bounds often more difficult to establish than upper bounds.
More precisely, we focus on a class of analytic functions defined over the real line and examine the sharpness of the worst-case error bounds for various quadrature formulas.

Our focus is on the integration problem
\begin{equation}\label{eq:integral_real_line}
    I(f):=\int_{-\infty}^{\infty}f(x)\rd x,
\end{equation}
where $f$ is an analytic function defined over the real line.
Integrals over the whole real line are important for their own sake as mentioned above. Furthermore, they form a foundation for quadrature formulas possibly on other intervals through variable transformations.
For an integral 
\(
\int_{a}^{b} g(t) \rd t
\)
with $-\infty \leq a < b \leq \infty$, 
by applying a variable transformation $t = \phi(x)$ with a function $\phi: (-\infty, \infty) \to (a,b)$, 
we have the equivalent integral 
\(
\int_{-\infty}^{\infty} g(\phi(x))\phi'(x) \rd x. 
\)
If the integrand $g(\phi(\cdot))\phi'(\cdot)$ decays rapidly at infinity, 
quadrature formulas on $(-\infty, \infty)$ will work well for computing the integral. 
In particular, 
even if $g$ has end-point singularities, 
they are annihilated by $\phi$ and do not deteriorate the computation. 
Based on this simple principle, 
single-exponential \cite{stenger1993numerical} and 
double-exponential \cite{TM1974}
quadrature formulas have been established.
This observation implies that numerical integration over the real line plays an important role even in approximating the integrals over bounded intervals, such as $[0,1]$ and $[-1,1]$.
Taking these facts into account, 
we focus on integrals on $(-\infty, \infty)$ in this paper. 

The trapezoidal rule is a popular quadrature rule for the integration problem~\eqref{eq:integral_real_line}. 
Indeed, the double-exponential formula was motivated by high efficiency of the trapezoidal rule for integrals on $(-\infty,\infty)$, which had been already observed and studied in \cite{goodwin1949evaluation,Sch1969,Ste1973}. 
Sugihara \cite{Sug97} established that the trapezoidal rule is close to optimal in the sense that is made precise below. This paper is partly motivated by improving the results in \cite{Sug97}.
Another widely used class of quadrature rules for \eqref{eq:integral_real_line} is the Gauss--type rules. 
Among these, the Gauss–Hermite formula is particularly widely used, and is also crucial for high-dimensional problems \cite{chen2017hessian,kamilis2018uncertainty,piazzola2022sparse}. 
Recently, however,
despite its optimality in terms of algebraic degree of exactness, Trefethen \cite{Tre22} pointed out its inefficiency compared with several other quadrature rules, such as the trapezoidal formula. 
Partially motivated by his result, we explain the inefficiency of the Gauss--Hermite formula in a rigorous way by proving the lower bounds of its integration error. 
Furthermore, following Trefethen \cite{Tre22}, we consider various standard quadrature formulas for finite intervals, which are applied upon truncating $(-\infty,\infty)$ to a finite interval. Our results show that this strategy is also nearly optimal.

With this aim in mind, instead of considering specific quadrature rules mentioned above, we keep the discussion general by considering a class of quadrature rules of the form
\begin{align}\label{eq:general_quadrature}
    A_n(f)=\sum_{i=1}^{n}w_if(\xi_i),
\end{align}
with a set of $n$ nodes $\{\xi_1,\dots,\xi_n\}\subset \RR$ and   corresponding  weights $\{w_1,\dots,w_n\}\subset \RR$. 
We will develop theory for this general class, which in turn gives results for the aforementioned quadrature rules as corollaries.

Any quadrature rule should aim for a small integration error $
|A_n(f)-I(f)|$, and a rapid decay of the error in $n$ when $f$ is smooth.  
One often seeks a guarantee on the error for a class of functions $F$,  
which is achieved by evaluating the \emph{worst-case error}. 
Let $F$ be a normed space. 
Then, the worst-case error in $F$ is given by
\[ e^{\wor}(A_n, F) := \sup_{\substack{f\in F\\ \|f\|_F\leq 1}}\left| A_n(f)-I(f)\right|,\]
where $\|\cdot\|_F$ denotes the norm of $F$. 
The best achievable quantity is given by the $n$-th minimal worst-case error:
\[ e^{\wor}(n, F) := \inf_{A_n}e^{\wor}(A_n, F), \]
where the infimum runs over all possible algorithms $A_n$ which use $n$ function evaluations \cite{NW2008}. 

Often, the exact value of $e^{\wor}(A_n, F)$ is not known but an upper bound on $e^{\wor}(A_n, F)$ is available. 
This upper bound, denoted by $B^{\up}(A_n, F)$, implies that
for any individual $f\in F$ we have
\[ \left| A_n(f)-I(f)\right|\leq \|f\|_F\, e^{\wor}(A_n,F)\leq \|f\|_F\, B^{\up}(A_n,F),\]
meaning that the error is at most $\|f\|_F\, B^{\up}(A_n,F)$ and
decays in $n$ at least as fast as $B^{\up}(A_n,F)$. 
Hence, upper bounds on $e^{\wor}(A_n, F)$  provide a theoretical guarantee on the accuracy and are thus of great importance.
If we know that the convergence rate of $B^{\up}(A_n,F)$ is the same as that of $e^{\wor}(n, F)$, the algorithm $A_n$ is said to achieve the optimal rate of convergence in $F$. 

What often seems to be overlooked is the sharpness of these upper bounds; non-sharp upper bounds may mislead us.
Suppose that we have two different quadrature formulas,  
$A_n^{(1)}$ and $A_n^{(2)}$, each with the same number of nodes. 
Does $B^{\up}(A_n^{(1)},F)<B^{\up}(A_n^{(2)},F)$ ensure that $A_n^{(1)}$ is a better algorithm than $A_n^{(2)}$ in $F$? 
The answer is \emph{no} if there is room for improvement of the bounds. 
Ideally, one would compare the values of the worst-case error directly, but again, evaluating the worst-case error of a given quadrature formula exactly is usually very challenging.

Of great importance, then, are lower bounds on the worst-case error. Although these are often more difficult to obtain than upper bounds, they are highly valuable when available. Lower bounds help assess the sharpness of upper bounds and thus aid in selecting suitable quadrature formulas. 
Let $B^{\low}(A_n, F)$ be a lower bound.
If the convergence rates of $B^{\low}(A_n, F)$ and $B^{\up}(A_n, F)$ in $n$ coincide, the upper bound is already tight and its improvement is possible only up to a constant factor.
Furthermore, if we have $B^{\up}(A_n^{(1)}, F)<B^{\low}(A_n^{(2)}, F)$, then we can conclude that $A_n^{(1)}$ is a better algorithm than $A_n^{(2)}$ in terms of the worst-case error in $F$.

In this paper, our interest is in analytic functions with various decay rates at infinity. Therefore, as $F$ we choose 
the \emph{weighted Hardy space}, denoted by $H^{\infty}(\Dcal_{d},\omega)$. 
This is a Banach space of functions analytic in a strip region $\Dcal_{d}$ around the real line whose decay is controlled by a weight function $\omega$. 
For example, 
when we consider Gauss--Hermite quadrature, 
the space $H^{\infty}(\mathcal{D}_{d}, \omega)$ with $\omega(x) = \exp(-x^{2})$
may be a typical choice to consider. 
Furthermore, 
the functions in the weighted Hardy spaces are intended to represent integrands obtained by variable transformations, such as double-exponential transformations.
As far as the authors are aware, this function space was first introduced in the context of numerical integration by Sugihara \cite{Sug97} to analyze the double-exponential formula from \cite{TM1974},  
where he established the optimality of this formula by proving the optimality of a suitably truncated trapezoidal rule in $H^{\infty}(\Dcal_{d},\omega)$.
Among many others, essentially the same space has been investigated by Trefethen and Weideman \cite[Section~5]{TW2014} and Trefethen \cite[Section~5]{Tre22} for exponentially convergent quadrature formulas.
The weighted Hardy space has been also considered for designing quadrature formulas by optimization
\cite{SatoshiHayakawa2020, Slevinsky.R.M_Olver_2015_UseConformalMaps,tanaka2017potential, tanaka2020construction} as well as in function approximations 
\cite{hayakawa2023convergence, TOS17, TS19}.

The main contribution of this paper is two-fold: 
\begin{enumerate}
    \item We derive two general lower bounds on the worst-case error in $H^{\infty}(\Dcal_{d},\omega)$ for quadrature rules of the form  \eqref{eq:general_quadrature}. 
    \item 
    As an application, 
    we establish almost matching worst-case error bounds for various quadrature formulas: 
    suitably truncated trapezoidal rule, scaled Gauss--Legendre quadrature, scaled Clenshaw--Curtis quadrature and Gauss--Hermite quadrature.
\end{enumerate}
The second contribution above leads to three major implications. 
Firstly, our results imply that the error bounds on the worst-case error for the trapezoidal rule in \cite{Sug97} are sharp, with no room for improvement beyond a polynomial factor.  
Secondly, our findings introduce a new open problem by revealing a discrepancy between the error decay of the trapezoidal rule and Sugihara's lower bound for general numerical integration rules in \cite{Sug97}. 
This suggests that either quadrature rules with faster error decay exist, challenging the optimality of the trapezoidal rule, or that Sugihara's general lower bound can be refined. This point will be addressed later in Remark~\ref{rem:compare_with_Sugihara}.
Thirdly, our contribution implies the sub-optimality of Gauss--Hermite quadrature relative to other above-mentioned quadrature formulas, 
which strengthens an argument made for the case $\omega(x)=\exp(-x^2)$ in a  recent paper by Trefethen \cite[Section~5]{Tre22}.

This paper is partly motivated by results for functions with finite smoothness 
\cite{Curbera.F_1998_OptimalIntegrationLipschitz,GKS23,KSG22}.
In the seminal paper \cite{Curbera.F_1998_OptimalIntegrationLipschitz},  Curbera showed that, for integration with respect to the Gaussian measure,  
the worst-case error of quadratures using the zeros of Hermite polynomials---in particular the Gauss--Hermite rule---is quadratically worse than optimal quadratures for Lipschitz continuous integrands.  
The Gauss--Hermite rule is not optimal in Sobolev scales either, as shown by two of the current authors with Suzuki \cite{KSG22};  
in the $L^2$-Sobolev spaces over $\mathbb{R}$ of order $\alpha\in \NN$, where the integrability condition is with respect to the Gaussian measure, the matching upper and lower worst-case error bounds of the order $n^{-\alpha/2}$ for the Guass-Hermite quadrature have been recently shown in \cite{KSG22}. On the other hand, a suitably truncated trapezoidal rule attains the optimal rate $n^{-\alpha}$ up to a logarithmic factor. 
More recently, a randomized trapezoidal rule has been introduced in \cite{GKS23} that attains the optimal rate $n^{-\alpha-1/2}$ of the root-mean-squared error, up to a logarithmic factor, in the same spaces. 
For the important case of analytic integrands where $\alpha$ may be taken arbitrarily large, these results do not offer insights as to which quadrature rules give smaller error,
and a different argument is necessary to discuss the optimality of quadrature formulas. 
This is the aim of this paper. 

The rest of this paper is organized as follows. 
In \cref{sec:pre}, we define the weighted Hardy space $H^{\infty}(\Dcal_{d},\omega)$, a space of analytic functions around the real line on the complex plane, and introduce the two different weight functions $\omega$. 
We also discuss the upper bounds on the worst-case error for various quadrature formulas in $H^{\infty}(\Dcal_{d},\omega)$, some of which are known in the literature \cite{Sug97,Tre22}, while the others are new in this paper. Then, as the main part of this paper, we study lower bounds on the worst-case error in $H^{\infty}(\Dcal_{d},\omega)$. In particular, we derive two different forms of a lower bound in \cref{sec:main}, and by applying these forms, we show nearly matching lower bounds for various quadrature formulas in \cref{sec:applications}, establishing the sub-optimality of the Guass-Hermite quadrature relative to the other quadratures.

\section{Preliminaries}\label{sec:pre}

\subsection{Function spaces}\label{subsec:function_spaces}
Following \cite{Sug97, TS19}, we introduce a space of analytic functions that we work with throughout this paper. 
For a real number $d>0$, we define a strip region $\Dcal_d$ around the real line on the complex plane by
\[
\Dcal_{d}:=\{z\in\CC\mid|\rIm\,z|<d\}.
\]
As considered in \cite{stenger1993numerical}, the space $B(\Dcal_d)$ is a space of analytic functions over $\Dcal_d$ such that for any function $\omega\in B(\Dcal_d)$ two conditions
\[ \lim_{x\to \pm \infty}\int_{-d}^{d}|\omega(x+\ri y)| \rd y =0 \]
and 
\[ \lim_{y\nearrow d}\int_{-\infty}^{\infty}\left(|\omega(x+\ri y)| + |\omega(x-\ri y)|\right)\rd x <\infty \]
hold.
Then, we call $\omega: \Dcal_d\to \CC$ a \emph{weight function} if 
$\omega$ 
satisfies the following: $\omega$ is in $B(\Dcal_d)$; $\omega$ does not vanish at any point in $\Dcal_d$; and $\omega$ takes positive real values in $(0,1]$ on the real line.
Moreover, for the sake of simplicity, we assume that $\omega$ is even on the real line and monotonically decreases as $x$ goes away from $0$.

Throughout this paper, we assume that the target integrand is in the following function space consisting of analytic functions with a  decay condition specified by $\omega$.
\begin{definition}
    For a weight function $\omega \in B(\Dcal_d)$, define 
    \[ H^{\infty}(\Dcal_{d},\omega):=\left\{ f:\Dcal_{d}\to\CC\;\bigg|\;\text{\ensuremath{f(z)} is analytic in \ensuremath{\Dcal_{d}} and }\|f\|:=\sup_{z\in\Dcal_{d}}\left|\frac{f(z)}{\omega(z)}\right|<\infty\right\} ,\]
    which is called the weighted Hardy space.
\end{definition}
Notice that
$f\in H^{\infty}(\Dcal_{d},\omega)$ implies $|f(x)|\leq \omega(x)\|f\|$ for $x\in \RR$, i.e., the weight function $w$ controls the decay of $f$ on $\RR$.
Regarding a lower bound on the worst-case error in $H^{\infty}(\Dcal_{d},\omega)$, the following result is implied from \cite[Eqs.~(3.11) and (3.13)]{Sug97}. We provide the proof for completeness.

\begin{proposition}\label{prop:worst-case_error}
Let $A_n$ be a quadrature rule of the form \eqref{eq:general_quadrature}. For any weight function $\omega$, the following holds:
\[ e^{\wor}(A_n, H^{\infty}(\Dcal_{d},\omega)) \geq \int_{-\infty}^{\infty}\omega(x)\prod_{i=1}^{n}\left|\tanh\left(\frac{\pi}{4d}(x-\xi_{i})\right)\right|^{2} \rd x. \]
\end{proposition}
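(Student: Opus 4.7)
The plan is to exhibit an explicit \emph{fooling function} $f \in H^{\infty}(\Dcal_d,\omega)$ with $\|f\|\leq 1$ that vanishes at every node $\xi_i$, so that $A_n(f)=0$, while $I(f)$ reproduces the right-hand side. Since $\tanh(\pi z/(4d))$ has its singularities precisely on the lines $\rIm z=\pm d$, each factor $\tanh(\pi(z-\xi_i)/(4d))$ is analytic on $\Dcal_d$ and vanishes at $\xi_i$. Combining these with the weight $\omega$ (which is analytic and nonvanishing on $\Dcal_d$), the natural candidate is
\[
f(z) := \omega(z)\,\prod_{i=1}^n \tanh^2\!\left(\frac{\pi(z-\xi_i)}{4d}\right),
\]
which is analytic on $\Dcal_d$ and by construction satisfies $A_n(f)=\sum_{i=1}^n w_i\,f(\xi_i)=0$.

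Next I would verify the norm bound $\|f\|_{H^\infty(\Dcal_d,\omega)}\leq 1$, which (after division by $\omega$) reduces to showing $|\tanh(\pi z/(4d))|\leq 1$ for every $z\in\Dcal_d$. Setting $\pi z/(4d)=u+\ri v$ with $|v|<\pi/4$, a direct expansion of $\sinh(u+\ri v)$ and $\cosh(u+\ri v)$ gives the standard identity
\[
|\tanh(u+\ri v)|^2 \;=\; \frac{\sinh^2 u + \sin^2 v}{\sinh^2 u + \cos^2 v},
\]
so the inequality $|\tanh|\leq 1$ is equivalent to $\sin^2 v\leq \cos^2 v$, i.e., $|v|\leq\pi/4$. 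This holds throughout $\Dcal_d$ since $|\rIm z|<d$. Taking the product over $i$ and squaring preserves the bound, hence $|f(z)/\omega(z)|\leq 1$ on $\Dcal_d$.

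With $f$ admissible, the definition of the worst-case error immediately yields
\[
e^{\wor}(A_n, H^\infty(\Dcal_d,\omega)) \;\geq\; |I(f)-A_n(f)| \;=\; |I(f)|.
\]
Finally, on the real line $\omega(x)>0$ and each $\tanh^2(\pi(x-\xi_i)/(4d))$ is a nonnegative real number, so the integrand defining $I(f)$ is pointwise nonnegative; using the elementary identity $\tanh^2(t)=|\tanh(t)|^2$ for real $t$ then produces exactly the claimed lower bound. The only delicate step is the sup-norm estimate for the $\tanh$-product: the bound $|\tanh(\pi z/(4d))|\leq 1$ barely holds on $\Dcal_d$ and saturates as $\rIm z\to\pm d$, which is precisely why the factor $\pi/(4d)$ appears in the conclusion; once this is in hand, the remainder is a routine fooling-function computation.
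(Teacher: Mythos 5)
Your proposal is correct and takes essentially the same route as the paper: the paper's fooling function $g(z)=\omega(z)\prod_{i}\tanh\bigl(\tfrac{\pi}{4d}(z-\xi_i)\bigr)\overline{\tanh\bigl(\tfrac{\pi}{4d}(\overline{z}-\xi_i)\bigr)}$ coincides with your $f$ by Schwarz reflection ($\overline{\tanh(\overline{w})}=\tanh(w)$ for real-coefficient $\tanh$ and real $\xi_i$), and both arguments proceed identically (analyticity, the bound $|\tanh|\leq 1$ on $\Dcal_d$, vanishing at the nodes, nonnegativity of the integrand on $\RR$), with your explicit modulus identity simply replacing the paper's appeal to $\tanh$ being a conformal map onto the unit disc. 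One inconsequential slip: the poles of $\tanh(\pi z/(4d))$ lie at the isolated points $z=\pm 2d\ri,\pm 6d\ri,\dots$ on the imaginary axis, not on the lines $\rIm z=\pm d$; on those lines the modulus equals $1$, which is exactly the saturation you correctly note.
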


\begin{proof}
    Let us consider a function $g: \Dcal_d\to \CC$ defined by
    \[ g(z)=\omega(z)\prod_{i=1}^{n}\tanh\left(\frac{\pi}{4d}(z-\xi_{i})\right)\overline{\tanh\left(\frac{\pi}{4d}(\overline{z}-\xi_{i})\right)}. \]
    By noting that the complex function $z\mapsto \tanh(\pi (z-\xi)/(4d))$ is a conformal mapping from $\Dcal_d$ to the unit disc $\{z\in \CC\mid |z|< 1\}$ for any $\xi\in \RR$,  it is easy to check that $g$ is analytic in $\Dcal_d$ and 
    \begin{align*}
        \|g\| & =\sup_{z\in\Dcal_{d}}\prod_{i=1}^{n}\left|\tanh\left(\frac{\pi}{4d}(z-\xi_{i})\right)\overline{\tanh\left(\frac{\pi}{4d}(\overline{z}-\xi_{i})\right)}\right| \\
        & \leq \prod_{i=1}^{n}\left(\sup_{z\in\Dcal_{d}}\left|\tanh\left(\frac{\pi}{4d}(z-\xi_{i})\right)\right|\right)^2= 1.
    \end{align*}
    This means that $g$ belongs to the unit ball in  $H^{\infty}(\Dcal_{d},\omega)$. As $g(\xi_i)=0$ for all $i=1,\ldots,n$, we have $A_n(g)=0$ and
    \begin{align*}
        e^{\wor}(A_n, H^{\infty}(\Dcal_{d},\omega)) & \geq \left| A_n(g)-I(g)\right| = I(g) \\
        & = \int_{-\infty}^{\infty}\omega(x)\prod_{i=1}^{n}\left|\tanh\left(\frac{\pi}{4d}(x-\xi_{i})\right)\right|^{2} \rd x.
    \end{align*}
    Thus the claim is proved.
\end{proof}
The ``fooling function'' $g$ in the proof above deserves some comments.
The function $g$ is designed to vanish at the nodes  $\xi_1,\ldots,\xi_n$, ensuring $A_n(g)=0$. 
A typical approach to constructing such a fooling function is to use compactly supported functions whose support does not include any node; see, for instance, \cite{Ba1959,dick2015proof,GKS23,KSG22}. Such compactly supported functions do not work in our case; any function $f$ that is analytic in the strip region and has compact support must be necessarily zero, $f\equiv 0$.  
This is why our fooling function $g$ is given in the product form, which introduces some challenges in the subsequent analyses.
 
It is also worth mentioning that the lower bound on the worst-case error given in Proposition~\ref{prop:worst-case_error} depends only on the nodes and is irrelevant to the quadrature weights.
Hence, optimizing the quadrature weights does not improve the lower bounds obtained from the result above. 

\begin{remark}
In fact, the lower bound in Proposition~\ref{prop:worst-case_error} applies to any quadrature rule, whether it depends on the function values at the quadrature nodes $\xi_1,\ldots,\xi_n\in \mathbb{R}$ linearly or non-linearly. 
Let $A_n$ be any non-linear (or linear) quadrature rule using the function values at $\xi_1,\ldots,\xi_n$,
    i.e., $A_n\colon H^{\infty}(\Dcal_{d},\omega)\to \mathbb{R}$ is a mapping of the form $A_n(f)=\mathcal{I}_n(f(\xi_1),\ldots,f(\xi_n))$ with some mapping $\mathcal{I}_n\colon \mathbb{R}^n\to \mathbb{R}$. 
    Let $g: \Dcal_d\to \CC$ be the function given in the proof of Proposition~\ref{prop:worst-case_error}. Since we have $g(\xi_1)=\cdots=g(\xi_n)=0$, i.e., we only have the information that the $n$ function values are all equal to $0$, $A_n(g)=A_n(-g)$ holds. Noting that the functions $\pm g$ belong to the unit ball in $H^{\infty}(\Dcal_{d},\omega)$, we obtain
    \begin{align*}
        e^{\wor}(A_n, H^{\infty}(\Dcal_{d},\omega)) & \geq \max\left\{\left| A_n(g)-I(g)\right|, \left| A_n(-g)-I(-g)\right|\right\}\\
        & \geq \frac{\left| A_n(g)-I(g)\right|+\left| A_n(g)+I(g)\right|}{2}\geq |I(g)|=I(g).
    \end{align*}
    This lower bound on the worst-case error even applies to quadrature rules with adaptive selection of nodes, in which the $(i+1)$-th node $\xi_{i+1}$ is allowed to depend on the already-computed function values $f(\xi_1),\ldots,f(\xi_i)$; see \cite{novak1996power} for more details on adaptive algorithms. 
    Indeed, given such an algorithm $A^*_n$, let $\xi_1^*,\ldots,\xi_n^*\in \mathbb{R}$ be the nodes that $A^*_n$ sequentially generates for the integrand $f^*\equiv 0$. We then construct the fooling function $g^*$ corresponding to these nodes. 
    The algorithm $A^*_n$ yields the same quadrature nodes for $g^*$ as it does for $f^*$, and thus $A^*_n(g^*)=A^*_n(0)=A^*_n(-g^*)$ holds, which results in the same lower bound for the worst-case error. 
    For simplicity in presentation, we adhere to the linear algorithms of the form \eqref{eq:general_quadrature} in the rest of this paper.
  \end{remark}

For the choice of a weight function $\omega$, we consider the following two classes depending on how fast
$\omega$ decays on the real line.
\begin{definition}\label{def:weight}
    We say that the weight function $\omega$ is of 
    \begin{enumerate}
    \item single-exponential (SE) type if 
    \[ \alpha_1\exp(-(\beta|x|)^{\rho})\leq \omega(x)\leq \alpha_2\exp(-(\beta|x|)^{\rho})\]
    for $\alpha_1,\alpha_2,\beta>0$ and $\rho\geq 1$, and
    \item double-exponential (DE) type if
    \[ \alpha_1\exp(-\beta_1\exp(\gamma|x|))\leq \omega(x)\leq \alpha_2\exp(-\beta_2\exp(\gamma|x|))\]
    for $\alpha_1,\alpha_2,\beta_1,\beta_2,\gamma>0$ with $\beta_1\geq \beta_2$. 
\end{enumerate}
In what follows, weight functions of the SE and DE types are denoted by $\omega_{\SE}$ and $\omega_{\DE}$, respectively.
\end{definition}

\begin{remark}
As shown in \cite[Theorem~4.1]{Sug97}, there exists no weight function $\omega\in B(\Dcal_d)$ such that
\[ \omega(x)=O(\exp(-\beta\exp(\gamma|x|)))\quad \text{as}\quad |x|\to \infty, \]
where $\beta>0$ and $\gamma>\pi/(2d).$ Therefore, when we speak of weight functions of the DE type, we implicitly assume $\gamma\leq \pi/(2d)$ without further notice. 
This non-existence result also implies that considering weight functions up to the DE type is sufficient; any weight function that decays faster than the DE type results in bounds that are vacuously true.
\end{remark}

In \cite[Lemmas~3.5 and 3.6]{Sug97}, Sugihara applied Jensen's inequality to the lower bound in Proposition~\ref{prop:worst-case_error}, getting a universal lower bound on the worst-case error that holds for any $A_n$, for the case where the weight function is of either the SE or DE type.
To be precise, it was shown that there exists a constant $c>0$ that is independent of $n$ such that
\begin{equation}\label{eq:sugihara_lower_SE}
    e^{\wor}(A_n, H^{\infty}(\Dcal_{d},\omega_{\SE}))\geq cn^{1/(\rho+1)}\exp\left( -\left(\left(\frac{2}{\rho+1}\right)^{1/\rho} 2\pi d\beta n\right)^{\rho/(\rho+1)}\right),
\end{equation}
and
\begin{equation}\label{eq:sugihara_lower_DE}
    e^{\wor}(A_n, H^{\infty}(\Dcal_{d},\omega_{\DE}))\geq c\ln n\cdot \exp\left( -\frac{2\pi d\gamma n}{\ln(\pi d\gamma n/\beta_1)}\right),
\end{equation}
respectively.

On the one hand, these lower bounds 
\eqref{eq:sugihara_lower_SE} and \eqref{eq:sugihara_lower_DE} are powerful because $A_n$ can be \emph{any} quadrature of the form 
\eqref{eq:general_quadrature}---in fact, $A_n$ can be any non-linear numerical integration that gives zero when function values at the nodes are zero. 
On the other hand, when one is interested in a specific quadrature, they might be overly general; in the proof of \eqref{eq:sugihara_lower_SE} and \eqref{eq:sugihara_lower_DE}, no information about the distribution of the nodes is used.  
In this paper, we revisit Proposition~\ref{prop:worst-case_error} and derive sharper bounds by exploiting the distribution of quadrature nodes.

Before moving on, it is worth noting that lower bounds on the worst-case error for integrals over the interval $[-1,1]$ in suitable function classes have been extensively studied for quite some time. See, for instance, \cite{andersson1980,AB1984,Bakhvalov.NS_1967_OptimalSpeedIntegrating,MZZ2016,newman1979,Petras.K_1998_GaussianOptimalIntegration}. 
However, the primary focus has traditionally been on establishing lower bounds applicable to any general quadrature formula. 
It is much less common to prove lower bounds on the worst-case error for specific quadrature formulas, aiming to demonstrate the sharpness of upper bounds for respective formulas, as done in this paper.

\subsection{Upper bounds---known and new results}\label{subsec:upper_bounds}
We summarize upper bounds available in the literature.

Arguably, Gauss--Hermite quadrature is one of the best-known formulas for the case $\omega(x) = \exp(-x^2)$ or $\omega(x) = \exp(-x^2/2)$. 
The orthogonal polynomials corresponding to these weight functions are called the physicist's Hermite polynomials and the probabilist's Hermite polynomials, respectively. 
The widely accepted upper bound for the error, at least for the quadrature corresponding to the physicist's Hermite polynomial, appears to be $O(\exp(-cn^{1/2}))$ for some constant $c > 0$, see, for instance, \cite[p.~314]{DR84}. To the best of the authors' knowledge, error analysis for analytic functions can be traced back to Barrett's work \cite{Bar1961}. In this work, the convergence rate $O(\exp(-cn^{1/2}))$ for some positive constant $c$ was mentioned, but without an explicit proof or complete statement. 
Much later, a rigorous error analysis was made in \cite{Xiang2012} for entire functions with explicit decay conditions on $\mathbb{R}$, although relating the convergence rates obtained in this study to the rate $O(\exp(-cn^{1/2}))$ does not seem to be obvious.
Only recently, Wang and Zhang \cite{WZ2023} appear to have established explicit decay conditions on the integrand that allow for the convergence rate $O(\exp(-cn^{1/2}))$, where functions that are analytic in a strip region were considered. 
The decay condition in \cite{WZ2023} is $|x|^\sigma \exp(-x^2)$, with $\sigma\in \RR$. 
Upper error bounds for other choices of $\omega$ do not seem to exist in the literature. 

In contrast, the trapezoidal rule has been studied intensively with rigorous error analyses for analytic functions; see \cite{goodwin1949evaluation,Sug97,Tre22,TW2014} among many others. 
As mentioned in \cref{sec:intro}, with an aim to establish the optimality of the double-exponential formula, 
Sugihara proved the upper bounds on the worst-case error of the trapezoidal rule in $H^{\infty}(\Dcal_{d},\omega)$ for the weight function $\omega$ being of the SE and DE types in \cite{Sug97}. 
The upper bounds he showed in \cite{Sug97} are of $O(\exp(-(\pi d\beta n)^{\rho/(\rho+1)}))$ for the SE case, and of $O(\exp(-\pi d\gamma n/\ln(\pi d \gamma n/\beta_2)))$ for the DE case, see \cite[Theorems~3.1 and 3.2]{Sug97}, respectively. 
We show in Corollary~\ref{cor:trap} that these upper bounds cannot be improved by more than a polynomial factor.

\begin{sloppypar}
More recently, Trefethen \cite{Tre22} studied various quadrature formulas, such as scaled Gauss--Legendre, Clenshaw--Curtis, and trapezoidal quadratures, for $\omega(x) = \exp(-x^2)$.
In \cite[Theorem~5.1]{Tre22}, he showed that the integration errors for these formulas decay at the rate of $O(\exp(-cn^{2/3}))$ for some constant $c>0$. 
This rate decay is more favorable to $O(\exp(-cn^{1/2}))$, the rate expected from Gauss--Hermite quadrature. 
This rate appears to be only recently proven by Wang and Zhang \cite{WZ2023}, having been considered folklore for a long time. 
\end{sloppypar}

\begin{sloppypar}
In fact, by following his argument in \cite[Appendix]{Tre22}, we can show the upper bounds on the worst-case error for scaled Gauss–Legendre and Clenshaw–Curtis quadratures when $\omega$ is of the SE and DE types, as follows.
\end{sloppypar}
\begin{theorem}\label{thm:upper_bound_new}
    For $n\geq 2,$ let $A_n$ be either the Gauss--Legendre or the Clenshaw--Curtis quadrature  scaled from $[-1,1]$ to $[-T,T]$ with
    \begin{align} \label{eq:scaling_factor}
    T=\begin{cases} Ln^{1/(\rho+1)} & \text{for $\omega=\omega_{\SE}$,} \\ L\ln n & \text{for $\omega=\omega_{\DE}$,} \end{cases}
    \end{align}
    for a fixed $L>0$, where we assume $\gamma L\geq 1$ for the case $\omega=\omega_{\DE}$ with $\gamma$ being in Definition~\ref{def:weight}. Then, there exists a constant $C>0$, independent of $n$, such that $e^{\wor}(A_n, H^{\infty}(\Dcal_{d},\omega))$ is bounded above by $O(\exp(-Cn^{\rho/(\rho+1)}))$ for the case $\omega=\omega_{\SE}$ and $O(\exp(-Cn/\ln n))$ for the case $\omega=\omega_{\DE}$, respectively.
\end{theorem}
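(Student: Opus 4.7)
The plan is to follow the general strategy sketched by Trefethen in \cite[Appendix]{Tre22} and adapt it from the Gaussian weight to the SE/DE weight classes of Definition~\ref{def:weight}. The first step is to decompose the error into a tail-truncation piece and a truncated-domain quadrature piece,
\[
|I(f) - A_n(f)| \leq \left|\int_{|x|>T} f(x)\,\rd x\right| + \left|\int_{-T}^{T} f(x)\,\rd x - A_n(f)\right|,
\]
and to verify that the scaling $T$ in \eqref{eq:scaling_factor} balances the two contributions.

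For the truncation term, I would use $|f(x)|\leq \omega(x)\|f\|$ on $\RR$ and integrate the decay bounds of Definition~\ref{def:weight}. Elementary estimates give
\[
\int_T^\infty \omega_{\SE}(x)\,\rd x = O\!\bigl(T^{1-\rho}\exp(-(\beta T)^\rho)\bigr),\qquad \int_T^\infty \omega_{\DE}(x)\,\rd x = O\!\bigl(\rme^{-\gamma T}\exp(-\beta_2\rme^{\gamma T})\bigr).
\]
Substituting $T=Ln^{1/(\rho+1)}$ yields, in the SE case, decay of order $\exp(-(\beta L)^\rho n^{\rho/(\rho+1)})$ up to polynomial-in-$n$ prefactors; substituting $T=L\ln n$ together with $\gamma L\geq 1$ yields, in the DE case, decay of order $\exp(-\beta_2 n^{\gamma L})\leq \exp(-\beta_2 n)$, which is strictly faster than the target $\exp(-Cn/\ln n)$.

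For the quadrature piece, I would rescale $[-T,T]$ to $[-1,1]$ via $g(t):=f(Tt)$, so that the error becomes $T$ times the classical error of the Gauss--Legendre or Clenshaw--Curtis rule on $[-1,1]$ applied to $g$. The standard Bernstein-ellipse estimate reads
\[
\left|\int_{-1}^{1} g(t)\,\rd t - Q_n(g)\right| \leq \frac{C_1}{(1-\nu^{-2})\,\nu^{2n}}\sup_{z\in E_\nu}|g(z)|,
\]
valid for any $\nu>1$ such that $g$ extends analytically to the Bernstein ellipse $E_\nu$. The key is to choose $\nu=\nu(T)$ so that the scaled ellipse $TE_\nu$ just fits inside $\Dcal_d$, i.e., $T(\nu-\nu^{-1})/2 = d(1-\varepsilon)$ for small $\varepsilon>0$. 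Then $\log \nu \sim d/T$ as $T\to\infty$, so $\nu^{2n}\sim \exp(2nd/T)$, while $\sup_{z\in E_\nu}|g(z)|\leq \|f\|\sup_{z\in TE_\nu}|\omega(z)|$. Plugging in \eqref{eq:scaling_factor} yields quadrature-error bounds of order $\exp(-(2d/L)n^{\rho/(\rho+1)})$ (SE) and $\exp(-(2d/L)n/\ln n)$ (DE) up to polynomial-in-$T$ prefactors; taking $C$ strictly smaller than both rate constants absorbs those prefactors, matching the truncation rates and completing the argument.

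The main technical obstacle I foresee is a uniform control of $\sup_{z\in TE_\nu}|\omega(z)|$ by at most a polynomial factor in $T$, since Definition~\ref{def:weight} only postulates decay of $\omega$ on $\RR$. For canonical choices (e.g.\ $\omega(x)=\exp(-(\beta x)^\rho)$ with $\rho$ an even positive integer, or $\omega(x)=\exp(-\beta\rme^{\gamma x})$ under $\gamma d\leq \pi/2$) this is a direct computation because $TE_\nu$ hugs the real axis very tightly as $T\to\infty$. For a general weight in the class, I would combine membership in $B(\Dcal_d)$ with a Phragm\'en--Lindel\"of/maximum-modulus argument on the truncated half-strip $\{z\in\Dcal_d:|\mathrm{Re}\,z|\leq T\nu\}$ to bound $|\omega(z)|$ at most polynomially in $T$; since such a polynomial factor is annihilated by the super-polynomial quadrature decay, the leading exponential rate stated in the theorem is unaffected.
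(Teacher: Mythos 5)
Your proposal is correct and essentially reproduces the paper's own argument: the same truncation-plus-quadrature decomposition, the same tail estimates for $\omega_{\SE}$ and $\omega_{\DE}$, and the same Bernstein-ellipse bound with $\log\nu \sim d/T$, so that substituting \eqref{eq:scaling_factor} balances the two contributions and yields the stated rates. Two minor remarks: the Clenshaw--Curtis bound cited from \cite{Tre19} carries $\nu^{-n}$ rather than $\nu^{-2n}$ (immaterial here, since $C$ is unspecified), and the difficulty you flag about controlling $\sup_{z\in TE_\nu}|\omega(z)|$ is real---the paper passes over it by simply declaring the rescaled integrand ``bounded and analytic'' in $\Dcal_{d/T}$---but it can be settled more simply than by Phragm\'en--Lindel\"of: choose the ellipse so that its scaled semiminor axis is $d/2$, keeping it inside the fixed substrip $\Dcal_{d/2}$, where $\omega$ is bounded by the $B(\Dcal_d)$ conditions and the sub-mean-value property of $|\omega|$, at the cost only of a constant factor in the exponent.
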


\begin{proof}
Let $f\in H^{\infty}(\Dcal_{d},\omega)$.
For $A_n$, the Gauss--Legendre or the Clenshaw--Curtis quadrature scaled from $[-1,1]$ to $[-T,T]$, we have
\[ \left|I(f)-A_n(f)\right| \leq \left|\int_{-T}^{T}f(x)\rd x-A_n(f)\right|+\left|\int_{-\infty}^{-T}f(x)\rd x+\int_{T}^{\infty}f(x)\rd x\right|,\]
where the first term on the right-hand side is the integration error over the truncated interval $[-T,T]$ and the second term is the domain-truncation error.

First, we work with the truncation error. For either $\omega$ under study, we have 
    \begin{align*}
        \left|\int_{-\infty}^{-T}f(x)\rd x+\int_{T}^{\infty}f(x)\rd x\right| & \leq \sup_{x\in \RR}\left|\frac{f(x)}{\omega(x)}\right|\times \left( \int_{-\infty}^{-T}\omega(x)\rd x+\int_{T}^{\infty}\omega(x)\rd x\right)\\
        & \leq \|f\|\left( \int_{-\infty}^{-T}\omega(x)\rd x+\int_{T}^{\infty}\omega(x)\rd x\right).
    \end{align*}
    For the SE case, we further have
    \begin{align*}
        \int_{-\infty}^{-T}\omega(x)\rd x+\int_{T}^{\infty}\omega(x)\rd x & \leq 2\alpha_2\int_{T}^{\infty}\exp(-(\beta x)^{\rho})\rd x\\
        & \leq \frac{2\alpha_2}{\rho \beta^{\rho}T^{\rho-1}}\int_{T}^{\infty}\rho \beta^{\rho}x^{\rho-1}\exp(-(\beta x)^{\rho})\rd x\\
        & = \frac{2\alpha_2}{\rho \beta^{\rho}T^{\rho-1}}\exp(-(\beta T)^{\rho}),
    \end{align*}
    while for the DE case we have
    \begin{align*}
        \int_{-\infty}^{-T}\omega(x)\rd x+\int_{T}^{\infty}\omega(x)\rd x & \leq 2\alpha_2\int_{T}^{\infty}\exp(-\beta_2\exp(\gamma x))\rd x\\
        & \leq \frac{2\alpha_2}{\beta_2 \gamma \rme^{\gamma T}}\int_{T}^{\infty}\beta_2 \gamma \rme^{\gamma x}\exp(-\beta_2\exp(\gamma x))\rd x\\
        & = \frac{2\alpha_2}{\beta_2 \gamma \rme^{\gamma T}}\exp(-\beta_2\exp(\gamma T)),
    \end{align*}
where $\alpha_1,\alpha_2,\beta_1,\beta_2$ and $\gamma$ are in Definition \ref{def:weight}.

For the integration error, as noted in \cite[Appendix]{Tre22}, rescaling from $[-T,T]$ to $[-1,1]$ leads to an integrand (defined on the complex plane) bounded and analytic in the strip region $D_{d/T},$ which contains the Bernstein ellipses $\Ecal_r$ with foci $\pm 1$ and any parameter $r>0$ such that $r+r^{-1}\leq 2\sqrt{1+(d/T)^2}$.  
We take $r=1+d/T$, which satisfies this condition. 
According to \cite[Theorem~19.3]{Tre19}, the integration error is bounded above by $O(r^{-n})$
for Clenshaw--Curtis quadrature and $O(r^{-2n})$ for Gauss--Legendre quadrature, and thus $O(r^{-n})$ for both.
    
Now, for the SE case, by choosing $T=Ln^{1/(\rho+1)}$ for a fixed $L$, it is obvious that the truncation error is bounded above by $O(\exp(-Cn^{\rho/(\rho+1)}))$ for a constant $C>0$, while the integration error is bounded by a quantity of
\[ O(r^{-n}) = O\left(\left( 1+\frac{d}{Ln^{1/(\rho+1)}}\right)^{-Ln^{1/(\rho+1)}\times n^{\rho/(\rho+1)}/L}\right) = O\left(\exp(-C'n^{\rho/(\rho+1)})\right), \]
for a constant $C'>0$.
Similarly, for the DE case, by choosing $T=L\ln n$ for a fixed $L\geq 1/\gamma$, the truncation error is bounded above by $O(\exp(-Cn))$, while the integration error is bounded by a quantity of
\[ O(r^{-n}) = O\left(\left( 1+\frac{d}{L\ln n}\right)^{-L\ln n\times n/(L\ln n)}\right) = O\left(\exp(-C'n/\ln n)\right). \]
Thus we are done.
\end{proof}
It turns out that these rates are near optimal; see Corollary~\ref{cor:ultraspherical} and Table~\ref{tab:summary_of_results}.

\section{Main results I---general lower bounds}
\label{sec:main}
In this section, we derive two lower bounds on the worst-case error in $H^{\infty}(\Dcal_{d},\omega)$ for general quadrature rules of the form \eqref{eq:general_quadrature}.
Our starting point is Proposition~\ref{prop:worst-case_error}, which is by Sugihara \cite{Sug97}. 
His subsequent analysis following this proposition focuses on deriving a lower bound for general quadrature rules. 
In contrast, our strategy is to revisit and adapt his argument, aiming to derive different bounds for various specific quadrature rules, though in this section the bounds still hold for general quadrature rules. 
Instead of applying Jensen's inequality to the inequality in Proposition~\ref{prop:worst-case_error} as Sugihara did in \cite{Sug97}, we explore a different method, which leads us to two new bounds.

We assume that $n$ nodes in \eqref{eq:general_quadrature} are distinct and, without loss of generality, they are ordered, i.e., 
\begin{equation}
-\infty<\xi_{1}<\xi_{2}<\dots<\xi_n<\infty. \label{eq:distinct-ordered}
\end{equation}
We show one of the lower bounds in the following theorem. It should be noted that, while it may appear complicated, it is no longer in the integral form as in Proposition~\ref{prop:worst-case_error}.

\begin{theorem}
\label{thm:lower_estimate} The worst-case error of
a quadrature rule $A_n$ with $n$ distinct nodes is bounded below by
\begin{align}
    & e^{\wor}(A_n, H^{\infty}(\Dcal_{d},\omega)) \label{eq:gen-lb-2} \\
    & \geq \frac{(\tanh(1))^{4}}{30}\sum_{i=1}^{n-1}\omega(\max(|\xi_{i}|,|\xi_{i+1}|))(\xi_{i+1}-\xi_{i})\min\biggl\{1,\biggl(\frac{\pi(\xi_{i+1}-\xi_{i})}{4d}\biggr)^{4}\biggr\} \notag \\
    & \quad \times \Biggl(\prod_{j=1}^{i-1}\left|\tanh\left(\frac{\pi}{4d}(\xi_{i}-\xi_{j})\right)\right|^{2}\Biggr)\Biggl(\prod_{j'=i+2}^{n}\left|\tanh\left(\frac{\pi}{4d}(\xi_{i+1}-\xi_{j'})\right)\right|^{2}\Biggr). \notag 
\end{align}
\end{theorem}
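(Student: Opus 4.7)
The plan is to start from the integral lower bound in Proposition~\ref{prop:worst-case_error} and reduce it to a sum of contributions coming from the intervals between consecutive nodes. Since the integrand is nonnegative, I may restrict integration to $\bigcup_{i=1}^{n-1}[\xi_i,\xi_{i+1}]$, obtaining
\[
    e^{\wor}(A_n, H^{\infty}(\Dcal_{d},\omega)) \geq \sum_{i=1}^{n-1}\int_{\xi_i}^{\xi_{i+1}}\omega(x)\prod_{k=1}^n\left|\tanh\left(\frac{\pi(x-\xi_k)}{4d}\right)\right|^2\rd x.
\]
On each subinterval $[\xi_i,\xi_{i+1}]$, I will replace $\omega$ and the ``off-neighbour'' tanh factors (those with $k\neq i,i+1$) by their minima on the interval, and then handle the remaining integral involving only the two neighbouring factors $k=i$ and $k=i+1$.

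For the pointwise bounds, evenness and monotone decay of $\omega$ in $|x|$ give $\omega(x)\geq\omega(\max(|\xi_i|,|\xi_{i+1}|))$ on $[\xi_i,\xi_{i+1}]$. For the tanh factors with $k\neq i,i+1$, since $\tanh$ is odd and increasing, $|\tanh(\pi(x-\xi_k)/(4d))|$ is increasing in $|x-\xi_k|$: for $k<i$ the minimum on $[\xi_i,\xi_{i+1}]$ occurs at $x=\xi_i$, giving $|\tanh(\pi(\xi_i-\xi_k)/(4d))|$; for $k>i+1$ the minimum occurs at $x=\xi_{i+1}$, giving $|\tanh(\pi(\xi_{i+1}-\xi_k)/(4d))|$. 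Pulling these constants out of the integral leaves precisely the two products over $j<i$ and $j'>i+1$ that appear in the statement.

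What remains is to bound from below the ``two-node'' integral
\[
    J_i := \int_{\xi_i}^{\xi_{i+1}}\tanh^2\left(\frac{\pi(x-\xi_i)}{4d}\right)\tanh^2\left(\frac{\pi(\xi_{i+1}-x)}{4d}\right)\rd x.
\]
Setting $h:=\xi_{i+1}-\xi_i$ and $a:=\pi h/(4d)$, the substitution $t=(x-\xi_i)/h$ yields $J_i=h\int_0^1\tanh^2(at)\tanh^2(a(1-t))\rd t$. The key analytic ingredient is that $s\mapsto\tanh(s)/s$ is decreasing on $(0,\infty)$ (which follows from $\sinh(2s)\geq 2s$); hence $\tanh(at)\geq t\tanh(a)$ for $t\in[0,1]$ and $a>0$. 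Applying this to both factors,
\[
    J_i \geq h\tanh^4(a)\int_0^1 t^2(1-t)^2\rd t = \frac{h\tanh^4(a)}{30},
\]
where the Beta integral equals $B(3,3)=1/30$. Finally, monotonicity of $\tanh$ gives $\tanh^4(a)\geq\tanh^4(1)$ for $a\geq 1$, while $\tanh(a)\geq a\tanh(1)$ for $a\leq 1$ (again from monotonicity of $s\mapsto\tanh(s)/s$) yields $\tanh^4(a)\geq a^4\tanh^4(1)$. Combined, $\tanh^4(a)\geq\tanh^4(1)\min\{1,a^4\}$, which supplies both the constant $\tanh^4(1)/30$ and the $\min\{1,(\pi(\xi_{i+1}-\xi_i)/(4d))^4\}$ factor. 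Summing over $i=1,\ldots,n-1$ yields the claimed inequality.

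The main subtlety is obtaining the sharp exponent $4$ in $\min\{1,a^4\}$; this forces the use of the factorised lower bound $\tanh(at)\tanh(a(1-t))\geq t(1-t)\tanh^2(a)$ and the associated Beta integral, rather than cruder pointwise estimates that would lose a polynomial factor and thereby weaken the applications in \cref{sec:applications}. The remaining ingredients---restriction to subintervals, monotonicity of $\omega$, and monotonicity of $|\tanh|$---are routine.
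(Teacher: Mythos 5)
Your proposal is correct and takes essentially the same route as the paper's own proof: restrict the integral from Proposition~\ref{prop:worst-case_error} to the gaps between consecutive nodes, freeze $\omega$ and the off-neighbour $\tanh$ factors at the endpoints $\xi_i$, $\xi_{i+1}$, and reduce the two-node integral via $\tanh(at)\geq t\,\tanh(a)$ for $t\in[0,1]$ to the Beta integral $\int_0^1 t^2(1-t)^2\rd t=1/30$ together with $\tanh^4(a)\geq(\tanh(1))^4\min\{1,a^4\}$. The only cosmetic difference is that you justify the key inequality by the monotone decrease of $s\mapsto\tanh(s)/s$, while the paper uses concavity of $\tanh$ together with $\tanh(0)=0$; these are equivalent facts.
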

\begin{proof}
For the integral shown on the right-hand side of Proposition~\ref{prop:worst-case_error}, cutting off the integration domain from $\RR$ to $[\xi_{1},\xi_{n}]$
and considering the monotonicity of the function $\tanh\colon\RR\to(-1,1)$ yield
\begin{align}
 & e^{\wor}(A_n, H^{\infty}(\Dcal_{d},\omega)) \label{eq:before-approx-tanh-with-x} \\
 & \geq \sum_{i=1}^{n-1}\int_{\xi_{i}}^{\xi_{i+1}}\omega(x)\prod_{j=1}^{n}\left|\tanh\left(\frac{\pi}{4d}(x-\xi_{j})\right)\right|^{2}\rd x \notag \\
 & \geq\sum_{i=1}^{n-1}\Biggl(\prod_{j=1}^{i-1}\left|\tanh\left(\frac{\pi}{4d}(\xi_{i}-\xi_{j})\right)\right|^{2}\Biggr)\Biggl(\prod_{j'=i+2}^{n}\left|\tanh\left(\frac{\pi}{4d}(\xi_{i+1}-\xi_{j'})\right)\right|^{2}\Biggr) \notag \\
 & \qquad\times\int_{\xi_{i}}^{\xi_{i+1}}\omega(x)\left|\tanh\left(\frac{\pi}{4d}(x-\xi_{i})\right)\right|^{2}\left|\tanh\left(\frac{\pi}{4d}(x-\xi_{i+1})\right)\right|^{2}\rd x. \notag
\end{align}
As the weight function $\omega$ is assumed to have a single peak at $x=0$ and to monotonically decrease as $x$ goes away from $0$, the integral on the right-most side of \eqref{eq:before-approx-tanh-with-x} for each $i=1,\dots,n$ can be bounded below by 
\begin{align*}
 & \int_{\xi_{i}}^{\xi_{i+1}}\omega(x)\left|\tanh\left(\frac{\pi}{4d}(x-\xi_{i})\right)\right|^{2}\left|\tanh\left(\frac{\pi}{4d}(\xi_{i+1}-x)\right)\right|^{2}\rd x\\
 & \quad \geq \omega(\max(|\xi_{i}|,|\xi_{i+1}|))\int_{\xi_{i}}^{\xi_{i+1}}\left|\tanh\left(\frac{\pi}{4d}(x-\xi_{i})\right)\right|^{2}\left|\tanh\left(\frac{\pi}{4d}(\xi_{i+1}-x)\right)\right|^{2}\rd x\\
 & \quad = \omega(\max(|\xi_{i}|,|\xi_{i+1}|))(\xi_{i+1}-\xi_{i})\\
 & \quad\qquad\times\int_{0}^{1}\left|\tanh\left(\frac{\pi(\xi_{i+1}-\xi_{i})}{4d}x\right)\right|^{2}\left|\tanh\left(\frac{\pi(\xi_{i+1}-\xi_{i})}{4d}(1-x)\right)\right|^{2}\rd x.
\end{align*}
To bound this quantity further, we note that 
$\tanh(s)$ for $s>0$ is an increasing and concave function with $\tanh(0)=0$ to see 
\begin{align*}
\tanh(at)
    &\geq\tanh(\min\{1,t\}\,a)
    \geq (1-\min\{1,t\})\tanh(0)+\min\{1,t\}\tanh(a)\\
    &=\tanh(a)\min\{1,t\}\qquad\text{for any}\ t\geq0\text{ and }a>0.
\end{align*}
We apply this inequality twice to the bound above to obtain
\begin{align}
 & \int_{\xi_{i}}^{\xi_{i+1}}\omega(x)\left|\tanh\left(\frac{\pi}{4d}(x-\xi_{i})\right)\right|^{2}\left|\tanh\left(\frac{\pi}{4d}(\xi_{i+1}-x)\right)\right|^{2}\rd x \label{eq:int-lb} \\
 & \quad \geq \omega(\max(|\xi_{i}|,|\xi_{i+1}|))(\xi_{i+1}-\xi_{i})\left|\tanh\left(\frac{\pi(\xi_{i+1}-\xi_{i})}{4d}\right)\right|^{4}\int_{0}^{1}x^{2}(1-x)^{2}\rd x\notag \\
 & \quad \geq \frac{(\tanh(1))^{4}}{30}\omega(\max(|\xi_{i}|,|\xi_{i+1}|))(\xi_{i+1}-\xi_{i})\min\biggl\{1,\biggl(\frac{\pi(\xi_{i+1}-\xi_{i})}{4d}\biggr)^{4}\biggr\}.\notag
\end{align}
Applying this bound to the right-most side of \eqref{eq:before-approx-tanh-with-x} proves the claim.
\end{proof}

It will turn out that, for quadrature formulas whose nodes are equispaced or concentrated more around the origin, such as the trapezoidal rule and Gauss--Hermite quadrature, the minimum separation distance between the adjacent nodes, defined by
\[
\xi_{\min}:=\min_{i=1,\ldots,n-1}(\xi_{i+1}-\xi_{i}),
\]
dominates the rate of convergence. Using this notion, the lower bound on the worst-case error in Theorem~\ref{thm:lower_estimate} can be simplified.
In the following theorem, we show the second lower bound on the worst-case error.

\begin{theorem}\label{thm:lower_estimate1} 
The worst-case error of
a quadrature rule $A_n$ with $n$ distinct nodes in $H^{\infty}(\Dcal_{d},\omega)$ is bounded below by 
\begin{align}
    & e^{\wor}(A_n, H^{\infty}(\Dcal_{d},\omega)) \label{eq:gen-lb-1} \\ 
    & \geq \frac{(\tanh(1))^{4}}{30}\xi_{\min}\min\biggl\{1,\biggl(\frac{\pi\xi_{\min}}{4d}\biggr)^{4}\biggr\}\exp\left(-\frac{2\pi d}{\xi_{\min}}\right)\sum_{i=1}^{n-1}\omega(\max(|\xi_{i}|,|\xi_{i+1}|)), \notag
\end{align}
where
$\xi_{\min}$ is the minimum separation distance of the nodes for $A_n$.
\end{theorem}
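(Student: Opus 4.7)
The strategy is to reduce the $i$-dependent summand in Theorem~\ref{thm:lower_estimate} to an $i$-independent product times the simple remainder $\omega(\max(|\xi_i|,|\xi_{i+1}|))$, and then pull the $i$-independent factors out of the sum. The gap-dependent factors are handled immediately: since $\xi_{i+1}-\xi_i\geq\xi_{\min}$ for every $i$, we replace $(\xi_{i+1}-\xi_i)$ by $\xi_{\min}$ and $\min\{1,(\pi(\xi_{i+1}-\xi_i)/(4d))^{4}\}$ by $\min\{1,(\pi\xi_{\min}/(4d))^{4}\}$. What remains is to bound the two $\tanh$-products uniformly in $i$.

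Because the nodes are ordered and consecutive gaps are at least $\xi_{\min}$, we have $\xi_i-\xi_j\geq(i-j)\xi_{\min}$ for $j<i$ and symmetrically $\xi_{j'}-\xi_{i+1}\geq(j'-i-1)\xi_{\min}$ for $j'>i+1$. Using the monotonicity of $\tanh$ on $[0,\infty)$ and the fact that every factor lies in $[0,1)$, extending each product to infinitely many factors can only decrease it. Writing $a:=\pi\xi_{\min}/(4d)$ and $P:=\prod_{k=1}^{\infty}\tanh(ka)^{2}$, we obtain
\[
\prod_{j=1}^{i-1}\bigl|\tanh\bigl(\tfrac{\pi}{4d}(\xi_i-\xi_j)\bigr)\bigr|^{2}\geq\prod_{k=1}^{i-1}\tanh(ka)^{2}\geq P,
\]
and identically for the second product, so that the pair of $\tanh$-products together is bounded below by $P^{2}$.

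The main obstacle is producing an effective lower bound on $P$. The plan is to exploit the identity $\log\tanh(x)^{2}=-4\operatorname{arctanh}(\exp(-2x))$, which follows from $\tanh(x)=(1-\exp(-2x))/(1+\exp(-2x))$, together with the Maclaurin series $\operatorname{arctanh}(v)=\sum_{m=0}^{\infty}v^{2m+1}/(2m+1)$. Summing over $k\geq 1$ and interchanging the order of summation yields
\[
\sum_{k=1}^{\infty}\operatorname{arctanh}(\exp(-2ka))=\sum_{m=0}^{\infty}\frac{1}{(2m+1)(\exp(2a(2m+1))-1)}.
\]
The elementary inequality $\exp(x)-1\geq x$ combined with Euler's evaluation $\sum_{m=0}^{\infty}(2m+1)^{-2}=\pi^{2}/8$ bounds the right-hand side above by $\pi^{2}/(16a)$, which gives $\log P\geq-\pi^{2}/(4a)=-\pi d/\xi_{\min}$, and hence $P^{2}\geq\exp(-2\pi d/\xi_{\min})$. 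Substituting these bounds into Theorem~\ref{thm:lower_estimate} and pulling the $i$-independent factors out of the sum yields the claim; the factor of $2$ in the exponent of the final bound reflects precisely the presence of two $\tanh$-products, each contributing a factor of $P$.
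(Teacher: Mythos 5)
Your proof is correct, and it reaches exactly the paper's constant $\exp(-2\pi d/\xi_{\min})$, but the key technical step is handled by a different device. Both you and the paper start from Theorem~\ref{thm:lower_estimate}, replace the gap factors by their $\xi_{\min}$ counterparts, and use $\xi_i-\xi_j\geq (i-j)\xi_{\min}$ with the monotonicity of $\tanh$ to reduce everything to bounding $\prod_{k\geq 1}\tanh(ka)^2$ with $a=\pi\xi_{\min}/(4d)$. At that point the paper compares the sum $\sum_k \ln|\tanh(ka)|$ with a Riemann integral, extends the integration range to all of $\RR$, and invokes the value $\int_{-\infty}^{\infty}\ln|\tanh(y)|\rd y=-\pi^2/4$; you instead pass immediately to the infinite product and bound its logarithm through the identity $\ln\tanh(x)^2=-4\operatorname{arctanh}(\rme^{-2x})$, termwise geometric summation, the inequality $\rme^{x}-1\geq x$, and Euler's $\sum_{m\geq 0}(2m+1)^{-2}=\pi^2/8$. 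These give identical constants, which is no accident: the standard evaluation of the paper's improper integral proceeds via precisely your series, so your argument is in effect a discretized, self-contained version of it --- it avoids quoting the integral's value, at the cost of the series manipulation (the interchange of summation being justified since all terms are positive). One cosmetic remark: your uniform bound $P^2$ discards the dependence on $i$ and $n$ at the outset, whereas the paper keeps the finite limits $-(i-1)$ and $n-i-1$ before letting them go to infinity; since the final statement does not exploit that dependence, nothing is lost.
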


\begin{proof}
By the definition of the minimum separation distance $\xi_{\min}$, the lower estimate \eqref{eq:int-lb} is further bounded below by
\begin{align}
 & \int_{\xi_{i}}^{\xi_{i+1}}\omega(x)\left|\tanh\left(\frac{\pi}{4d}(x-\xi_{i})\right)\right|^{2}\left|\tanh\left(\frac{\pi}{4d}(\xi_{i+1}-x)\right)\right|^{2}\rd x\label{eq:int-lb-one} \\
 & \quad\geq\frac{(\tanh(1))^{4}}{30}\omega(\max(|\xi_{i}|,|\xi_{i+1}|))\xi_{\min}\min\biggl\{1,\biggl(\frac{\pi\xi_{\min}}{4d}\biggr)^{4}\biggr\}.\notag
\end{align}
Noting that $\ln|\tanh(t)|<0$ is monotonically
increasing for $t>0$, the products on the right-most side of \eqref{eq:before-approx-tanh-with-x}
for each $i=1,\dots,n-1$ can be bounded below by 
\begin{align*}
 & \Biggl(\prod_{j=1}^{i-1}\left|\tanh\left(\frac{\pi}{4d}(\xi_{i}-\xi_{j})\right)\right|^{2}\Biggr)\Biggl(\prod_{j'=i+2}^{n}\left|\tanh\left(\frac{\pi}{4d}(\xi_{i+1}-\xi_{j'})\right)\right|^{2}\Biggr)\\
 & \quad \geq \Biggl(\prod_{j=1}^{i-1}\left|\tanh\left(\frac{\pi(i-j)\xi_{\min}}{4d}\right)\right|^{2}\Biggr)\Biggl(\prod_{j'=i+2}^{n}\left|\tanh\left(\frac{\pi(j'-i-1)\xi_{\min}}{4d}\right)\right|^{2}\Biggr) \\
 & \quad = \Biggl(\prod_{j=1}^{i-1}\left|\tanh\left(\frac{\pi j\xi_{\min}}{4d}\right)\right|^{2}\Biggr)\Biggl(\prod_{j'=1}^{n-i-1}\left|\tanh\left(\frac{\pi j'\xi_{\min}}{4d}\right)\right|^{2}\Biggr)\\
 & \quad=\exp\left(2\sum_{j=1}^{i-1}\ln\left|\tanh\left(\frac{\pi j\xi_{\min}}{4d}\right)\right|+2\sum_{j'=1}^{n-i-1}\ln\left|\tanh\left(\frac{\pi j'\xi_{\min}}{4d}\right)\right|\right)\\
 & \quad\geq\exp\left(2\int_{0}^{i-1}\ln\left|\tanh\left(\frac{\pi\xi_{\min}x}{4d}\right)\right|\rd x+2\int_{0}^{n-i-1}\ln\left|\tanh\left(\frac{\pi\xi_{\min}x}{4d}\right)\right|\rd x\right)\\
 & \quad=\exp\left(2\int_{-(i-1)}^{n-i-1}\ln\left|\tanh\left(\frac{\pi\xi_{\min}x}{4d}\right)\right|\rd x\right)\\
 & \quad\geq\exp\left(\frac{8d}{\pi\xi_{\min}}\int_{-\infty}^{\infty}\ln\left|\tanh(y)\right|\rd y\right)=\exp\left(-\frac{2\pi d}{\xi_{\min}}\right).
\end{align*}
Hence, together with Theorem~\ref{thm:lower_estimate} and \eqref{eq:int-lb-one}, we conclude
\begin{align*}
  & e^{\wor}(A_n, H^{\infty}(\Dcal_{d},\omega)) \\
  & \geq \int_{-\infty}^{\infty}\omega(x)\prod_{i=1}^{n}\left|\tanh\left(\frac{\pi}{4d}(x-\xi_{i})\right)\right|^{2}\rd x\\
  & \geq\frac{(\tanh(1))^{4}}{30}\xi_{\min}\min\biggl\{1,\biggl(\frac{\pi\xi_{\min}}{4d}\biggr)^{4}\biggr\}\exp\left(-\frac{2\pi d}{\xi_{\min}}\right)\sum_{i=1}^{n-1}\omega(\max(|\xi_{i}|,|\xi_{i+1}|)).
\end{align*}
This proves \eqref{eq:gen-lb-1}. 
\end{proof}
For quadrature formulas whose nodes are densely distributed away from the origin, such as scaled Gauss--Legendre quadrature and scaled Clenshaw--Curtis quadrature, the lower bound shown in Theorem~\ref{thm:lower_estimate1} does not necessarily lead to sharp results, as will be seen in Remark~\ref{rem:ultraspherical}. In such cases, we need to use the lower bound shown in Theorem~\ref{thm:lower_estimate} directly.

\section{Main results II---applications to various quadrature formulas}\label{sec:applications}
Applying Theorem~\ref{thm:lower_estimate} and Theorem~\ref{thm:lower_estimate1} to various quadrature formulas, here we show almost matching lower bounds on the worst-case error in $H^{\infty}(\Dcal_{d},\omega)$, as compared to the upper bounds given in \cref{subsec:upper_bounds}. 
Table~\ref{tab:summary_of_results} summarizes the results. 
As can be seen, except for Gauss--Hermite quadrature for the DE case where an upper bound is unknown, respective lower and upper bounds on the worst-case error almost coincide with each other. The sharpness of our lower bounds is quite clear for the trapezoidal rule for which even the factors in the ``main'' exponential terms for the lower and upper bounds coincide, and moreover, as we prove in Corollary~\ref{cor:trap}, these factors are given explicitly. Such precise estimates are not available for scaled Gauss--Legendre and Clenshaw--Curtis quadratures. The rate of the lower bound for Gauss--Hermite quadrature is worse than the rates of the upper bounds for other quadratures (except for the SE case with $\rho=1$), which concludes the sub-optimality of Gauss--Hermite quadrature. 

\begin{table}[t]
\caption{Summary of the lower and upper bounds of the worst-case errors. 
The real numbers $c$, $c'$, $C$, and $C'$ are positive and independent of $n$.
They are different for each row. 
The sequences $p_{n}$ and $q_{n}$ are given by
$p_{n} = n^{\rho/(\rho+1)}$ and $q_{n} = n/\ln n$, respectively. In the leftmost column, \emph{trapz}, \emph{G-L}, \emph{C-C}, and \emph{G-H} denote suitably truncated trapezoidal rule, scaled Gauss--Legendre quadrature, scaled Clenshaw--Curtis quadrature, and Gauss--Hermite quadrature, respectively.}
\label{tab:summary_of_results}
\centering
   \begin{tabular}{p{0.07\linewidth}|p{0.23\linewidth}|p{0.17\linewidth}|p{0.17\linewidth}|p{0.15\linewidth}|}
        & \multicolumn{2}{c|}{SE} & \multicolumn{2}{c|}{DE} \\
        & lower & upper & lower & upper \\
        \hline
        trapz 
        & \(\Omega(p_{n}^{-4} \mathrm{e}^{-c p_{n}})\) \newline Cor.~\ref{cor:trap}
        & \(O(\mathrm{e}^{-c p_{n}})\) \newline \cite{Sug97,Tre22} 
        & \(\Omega(q_{n}^{-4} \mathrm{e}^{-C q_{n}})\) \newline Cor.~\ref{cor:trap}
        & \(O(\mathrm{e}^{-C q_{n}})\) \newline \cite{Sug97} \\
        \hline
        G-L \newline C-C 
        & \(\Omega(p_{n}^{-5} \mathrm{e}^{-c p_{n}})\) \newline Cor.~\ref{cor:ultraspherical}
        & \(O(\mathrm{e}^{-c' p_{n}})\) \newline \cite{Tre22}, Thm.~\ref{thm:upper_bound_new} 
        & \(\Omega(q_{n}^{-5} \mathrm{e}^{-C q_n})\) \newline Cor.~\ref{cor:ultraspherical}
        & \(O(\mathrm{e}^{-C' q_{n}})\) \newline Thm.~\ref{thm:upper_bound_new} \\
        \hline
        G-H 
        & \(\Omega(n^{-2} \mathrm{e}^{-c \sqrt{n}})\) \newline 
        Cor.~\ref{cor:gauss-hermite-general} %
        & \(O(\mathrm{e}^{-c' \sqrt{n}})\) \newline \cite{Bar1961,WZ2023} (\(\rho=2\))
        & \(\Omega(n^{-2} \mathrm{e}^{-C \sqrt{n}})\) \newline 
        Cor.~\ref{cor:gauss-hermite-general}%
        & --- \\
        \hline
    \end{tabular}
\end{table}

\subsection{Trapezoidal rule}

First, let us consider the equispaced $n=2m+1$ nodes distributed around
0. We write 
\[
\xi_{i}=i\xi_{\min},\quad\text{for all \ensuremath{i=-m,\ldots,m},}
\]
with 
\[
\xi_{\min}=\begin{cases}
{\displaystyle (2\pi d)^{1/(\rho+1)}(\beta m)^{-\rho/(\rho+1)}} & \text{for $\omega=\omega_{\SE}$,}\\
{\displaystyle \ln(2\pi d\gamma m/\beta_2)/(\gamma m)} & \text{for $\omega=\omega_{\DE}$.}
\end{cases}
\]
For these formulas for $\xi_{\mathrm{min}}$, see, e.g., \cite{Sug97}.
Note that here we employ a different numbering for the nodes, i.e., $\xi_{-m},\ldots,\xi_0,\ldots,\xi_m$, instead of $\xi_1,\ldots,\xi_n$ that have been used so far. 
According to \cite[Theorems~3.1 and 3.2]{Sug97}, the corresponding trapezoidal rule is
\begin{align}\label{eq:trap_rule}
    A_n(f)=\xi_{\min}\sum_{i=-m}^{m} f(\xi_i).
\end{align}

As we mentioned in \cref{subsec:upper_bounds}, upper bounds for this rule are known \cite[Theorems 3.1 and 3.2]{Sug97}, while lower bounds \eqref{eq:sugihara_lower_SE} and 
\eqref{eq:sugihara_lower_DE} for general quadrature rules are also known. 
Although these bounds are close, there is a slight gap in the rates; the question of whether the trapezoidal rule attains the general lower bounds \eqref{eq:sugihara_lower_SE} and 
\eqref{eq:sugihara_lower_DE}, or if the general bounds can be improved, has remained an open problem. 
The following result establishes that the upper bounds on the worst-case error for the trapezoidal rule in $H^{\infty}(\Dcal_{d},\omega)$ by Sugihara are sharp and not improvable more than a polynomial factor.

\begin{corollary}\label{cor:trap}
The worst-case error of the trapezoidal rule with $n=2m+1$ nodes in $H^{\infty}(\Dcal_{d},\omega)$ is bounded below by 
\[
\frac{\alpha_{1}c_{\beta,\rho}(\tanh(1))^{4}}{15}\left(\frac{\pi}{4d}\cdot\frac{(2\pi d)^{1/(\rho+1)}}{(\beta n)^{\rho/(\rho+1)}}\right)^{4}\exp\left(-(\pi d\beta n)^{\rho/(\rho+1)}\right),
\]
for the case $\omega=\omega_{\SE}$, and 
\[
\frac{\alpha_{1}c_{\beta_{1},\beta_{2},\gamma}(\tanh(1))^{4}}{15}\left(\frac{\pi}{4d}\cdot\frac{\ln(\pi d\gamma n/\beta_{2})}{\gamma n}\right)^{4}\exp\left(-\frac{\pi d\gamma n}{\ln(\pi d\gamma n/\beta_{2})}\right),
\]
for the case $\omega=\omega_{\DE}$, where the constants $c_{\beta,\rho},c_{\beta_{1},\beta_{2},\gamma}>0$
do not depend on $n$. 
\end{corollary}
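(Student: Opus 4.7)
The plan is to apply Theorem~\ref{thm:lower_estimate1} directly to the trapezoidal rule \eqref{eq:trap_rule}, since its equispaced nodes are the natural setting for that bound: every consecutive gap equals exactly the minimum separation distance $\xi_{\min}$. All of the prefactor $\xi_{\min}\min\{1,(\pi\xi_{\min}/(4d))^4\}\exp(-2\pi d/\xi_{\min})$ in Theorem~\ref{thm:lower_estimate1} can then be read off directly once $\xi_{\min}$ is substituted; the remaining work is to lower-bound the sum $\sum_i\omega(\max(|\xi_i|,|\xi_{i+1}|))$ and to rewrite everything in terms of $n$ rather than $m$.

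First, I would simplify the $\min\{\cdot,\cdot\}$ factor. Because both prescribed values of $\xi_{\min}$ tend to $0$ as $n\to\infty$, for all sufficiently large $n$ one has $\pi\xi_{\min}/(4d)<1$, so the minimum equals $(\pi\xi_{\min}/(4d))^4$; the finitely many small-$n$ exceptions can be absorbed into $c_{\beta,\rho}$ (respectively $c_{\beta_1,\beta_2,\gamma}$). Next, I would convert $\exp(-2\pi d/\xi_{\min})$ into the exponential written in the statement. For the SE case, direct substitution gives $2\pi d/\xi_{\min}=(2\pi d\beta m)^{\rho/(\rho+1)}=(\pi d\beta(2m))^{\rho/(\rho+1)}\leq(\pi d\beta n)^{\rho/(\rho+1)}$ since $n=2m+1\geq 2m$, yielding the required $\exp(-(\pi d\beta n)^{\rho/(\rho+1)})$ as a lower bound. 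For the DE case, substituting gives $2\pi d/\xi_{\min}=\pi d\gamma(2m)/\ln(\pi d\gamma(2m)/\beta_2)$; since $N\mapsto N/\ln(AN)$ is increasing for large $N$, the analogous inequality with $n$ in place of $2m$ follows.

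Second, I would bound $\sum_i\omega(\max(|\xi_i|,|\xi_{i+1}|))$ below by a quantity of order $1/\xi_{\min}$. Because $\xi_i=i\xi_{\min}$ with $-m\leq i\leq m$ and $\omega$ is even and monotonically decreasing on $[0,\infty)$, this sum equals $2\sum_{j=1}^m\omega(j\xi_{\min})$. By monotonicity, $\omega(j\xi_{\min})\geq \omega(x)$ for $x\in[j\xi_{\min},(j+1)\xi_{\min}]$, so a one-sided Riemann-sum comparison gives
\[
\xi_{\min}\sum_{j=1}^m\omega(j\xi_{\min})\geq\int_{\xi_{\min}}^{(m+1)\xi_{\min}}\omega(x)\rd x,
\]
and as $n\to\infty$ the right-hand side converges to $\int_0^\infty\omega(x)\rd x$. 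Using the pointwise lower bounds in Definition~\ref{def:weight}, this integral is bounded below by $\alpha_1 c_{\beta,\rho}$ (respectively $\alpha_1 c_{\beta_1,\beta_2,\gamma}$), where the constants depend only on the stated parameters. The resulting factor $1/\xi_{\min}$ combines with the $\xi_{\min}\cdot(\pi\xi_{\min}/(4d))^4$ from Theorem~\ref{thm:lower_estimate1} to give the polynomial factor $(\pi\xi_{\min}/(4d))^4$ in the corollary, and the factor $2$ from the symmetric sum converts the denominator $30$ into $15$.

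I do not expect any genuinely hard step; the argument is essentially bookkeeping and relies on the fact that Theorem~\ref{thm:lower_estimate1} is already tailored to equispaced configurations. The only slightly delicate point is the Riemann-sum lower bound for $\sum_j\omega(j\xi_{\min})$: it is this step that extracts the correct polynomial prefactor $\xi_{\min}^4$ (rather than the naive $\xi_{\min}^5$ that one would read off Theorem~\ref{thm:lower_estimate1} without any sum estimate), and thereby reveals that the Sugihara upper bound of \cite{Sug97} cannot be improved by more than a polynomial factor.
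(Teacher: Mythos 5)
Your proposal is correct and takes essentially the same route as the paper's proof: apply Theorem~\ref{thm:lower_estimate1} to the equispaced nodes, lower-bound $\sum_{i}\omega(\max(|\xi_{i}|,|\xi_{i+1}|))$ via a Riemann-sum comparison so that it is of order $1/\xi_{\min}$ (the paper pins down an explicit $n$-independent sub-interval of $[\xi_{\min},(m+1)\xi_{\min}]$, where you instead invoke convergence of the integral to $\int_{0}^{\infty}\omega(x)\rd x>0$, an equivalent device), then substitute the prescribed $\xi_{\min}$ and absorb the finitely many small-$n$ cases into the constants. Your bookkeeping (the factor $2$ turning $30$ into $15$, and the monotonicity arguments passing from $m$ to $n=2m+1$ in both the exponential and polynomial factors) matches the paper's.
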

\begin{proof}
We apply Theorem~\ref{thm:lower_estimate1}. 
Let us consider the case $\omega=\omega_{\SE}$ first. 
Because of the symmetry of nodes, we have 
\begin{align*}
\sum_{i=-m}^{m-1}\omega(\max(|\xi_{i}|,|\xi_{i+1}|)) & \geq2\alpha_{1}\sum_{i=1}^{m}\rme^{-(\beta i\xi_{\min})^{\rho}}\geq2\alpha_{1}\int_{1}^{m+1}\rme^{-(\beta x\xi_{\min})^{\rho}}\rd x\\
 & =\frac{2\alpha_{1}}{\xi_{\min}}\int_{\xi_{\min}}^{(m+1)\xi_{\min}}\rme^{-(\beta x)^{\rho}}\rd x.
\end{align*}
When $m\geq\rho$, the integration domain $[\xi_{\min},(m+1)\xi_{\min}]$
contains a sub-interval 
\[
[(2\pi d/\beta^{\rho})^{1/(\rho+1)},(2\pi d/\beta^{\rho})^{1/(\rho+1)}(\rho+1)/\rho^{\rho/(\rho+1)}],
\]
where we used the fact that $(x+1)/x^{\rho/(\rho+1)}$ for $x>0$
is minimized when $x=\rho$ to get the upper limit. Therefore, we
have 
\begin{align*}
\sum_{i=-m}^{m-1}\omega(\max(|\xi_{i}|,|\xi_{i+1}|)) & \geq\frac{2\alpha_{1}}{\xi_{\min}}\int_{(2\pi d/\beta^{\rho})^{1/(\rho+1)}}^{(2\pi d/\beta^{\rho})^{1/(\rho+1)}(\rho+1)/\rho^{\rho/(\rho+1)}}\rme^{-(\beta x)^{\rho}}\rd x=:\frac{2\alpha_{1}c_{\beta,\rho}}{\xi_{\min}},
\end{align*}
so that the worst-case error of the trapezoidal rule with large $n$ such that $\pi\xi_{\min}\leq4d$ is bounded below by 
\begin{align*}
 & e^{\wor}(A_n, H^{\infty}(\Dcal_{d},\omega_{\SE}))
 \\
 & \quad\geq\frac{\alpha_{1}c_{\beta,\rho}(\tanh(1))^{4}}{15}\left(\frac{\pi\xi_{\min}}{4d}\right)^{4}\exp\left(-\frac{2\pi d}{\xi_{\min}}\right)\\
 & \quad=\frac{\alpha_{1}c_{\beta,\rho}(\tanh(1))^{4}}{15}\left(\frac{\pi}{4d}\cdot\frac{(2\pi d)^{1/(\rho+1)}}{(\beta m)^{\rho/(\rho+1)}}\right)^{4}\exp\left(-(2\pi d\beta m)^{\rho/(\rho+1)}\right).
\end{align*}
Since a similar lower bound can be proven for small $n$ by adjusting the constant $c_{\beta,\rho}$, by noting $n=2m+1$, this proves the result for the case $\omega=\omega_{\SE}$.

Let us move on to the case $\omega=\omega_{\DE}$. 
Again, because of the symmetry of nodes, we have 
\begin{align*}
\sum_{i=-m}^{m-1}\omega(\max(|\xi_{i}|,|\xi_{i+1}|)) & \geq 2\alpha_{1}\sum_{i=1}^{m}\rme^{-\beta_{1}\rme^{\gamma i\xi_{\min}}}\geq 2\alpha_{1}\int_{1}^{m+1}\rme^{-\beta_{1}\rme^{\gamma x\xi_{\min}}}\rd x\\
 & =\frac{2\alpha_{1}}{\xi_{\min}}\int_{\xi_{\min}}^{(m+1)\xi_{\min}}\rme^{-\beta_{1}\rme^{\gamma x}}\rd x.
\end{align*}
The lower endpoint of the integration domain $[\xi_{\min},(n+1)\xi_{\min}]$
is bounded above by 
\[
\xi_{\min}=\frac{\ln(2\pi d\gamma m/\beta_{2})}{\gamma m}\leq\frac{2\pi d}{\beta_{2}\rme},
\]
where the equality holds if $m=\beta_{2}\rme/(2\pi d\gamma)$. 
The upper endpoint is given by 
\[
(m+1)\xi_{\min}=\frac{\ln(2\pi d\gamma/\beta_{2})+\ln(m)}{\gamma}\left(1+\frac{1}{m}\right).
\]
Since there exists $m_{0}\in\NN$, which depends on $\beta_{2}$ and
$\gamma$, such that $(m+1)\xi_{\min}$ monotonically increases toward infinity for $m\geq m_{0}$, for $m$ large enough the upper endpoint is bounded below by $2\pi d/(\beta_{2}\rme)+1$. 
This way, again for
$m$ large enough, the integration domain $[\xi_{\min},(m+1)\xi_{\min}]$
contains a sub-interval 
\[
\left[\frac{2\pi d}{\beta_{2}\rme},\frac{2\pi d}{\beta_{2}\rme}+1\right].
\]
Therefore, we have 
\[
\sum_{i=-m}^{m}\omega(\max(|\xi_{i}|,|\xi_{i+1}|))\geq\frac{2\alpha_{1}}{\xi_{\min}}\int_{2\pi d/(\beta_{2}\rme)}^{2\pi d/(\beta_{2}\rme)+1}\rme^{-\beta_{1}\rme^{\gamma x}}\rd x:=\frac{2\alpha_{1}c_{\beta_{1},\beta_{2},\gamma}}{\xi_{\min}},
\]
so that the worst-case error of the trapezoidal rule with large $n$
is bounded below by 
\begin{align*}
 & e^{\wor}(A_n, H^{\infty}(\Dcal_{d},\omega_{\DE}))\\
 & \quad\geq\frac{\alpha_{1}c_{\beta_{1},\beta_{2},\gamma}(\tanh(1))^{4}}{15}\left(\frac{\pi\xi_{\min}}{4d}\right)^{4}\exp\left(-\frac{2\pi d}{\xi_{\min}}\right)\\
 & \quad\geq\frac{\alpha_{1}c_{\beta_{1},\beta_{2},\gamma}(\tanh(1))^{4}}{15}\left(\frac{\pi}{4d}\cdot\frac{\ln(2\pi d\gamma m/\beta_{2})}{\gamma m}\right)^{4}\exp\left(-\frac{2\pi d\gamma m}{\ln(2\pi d\gamma m/\beta_{2})}\right).
\end{align*}
By noting $n=2m+1$, this proves the result for the case $\omega=\omega_{\DE}$.
\end{proof}

\begin{remark}\label{rem:compare_with_Sugihara}
    As mentioned in \cref{subsec:upper_bounds}, Sugihara \cite{Sug97} proved upper bounds on the worst-case error of the trapezoidal rule \eqref{eq:trap_rule} in $H^{\infty}(\Dcal_{d},\omega)$, which are of $O(\exp(-\pi d\beta n)^{\rho/(\rho+1)})$ for the SE case, and of $O(\exp(-\pi d\gamma n/\ln(\pi d \gamma n/\beta_2)))$ for the DE case.
    We see that our obtained lower bounds match these upper bounds, apart from polynomial factors. The exponents of the ``main'' exponential terms are improved from the universal lower bounds presented in \cite{Sug97}, see \eqref{eq:sugihara_lower_SE} and \eqref{eq:sugihara_lower_DE}, respectively. Whether it is possible to improve the universal lower bounds in \cite{Sug97} remains open for future research.
\end{remark}

\subsection{Scaled quadratures related to the ultraspherical polynomials}

The ultraspherical polynomials, or the Gegenbauer polynomials, are orthogonal polynomials on the interval $[-1,1]$ with respect to the weight $(1-x^2)^{\alpha}$ for some $\alpha> -1$.
This family of polynomials is a special case of the Jacobi polynomials, which are orthogonal with respect to the weight $(1-x)^{\alpha}(1+x)^{\beta}$ for some $\alpha,\beta>-1$. Since the ultraspherical polynomials are a generalization of the Legendre polynomials ($\alpha=0$), the Chebyshev polynomials of the first kind ($\alpha=-1/2$), and the Chebyshev polynomials of the second kind ($\alpha=1/2$),  the following result applies to any quadrature formula 
that uses the roots of such polynomials as its nodes, 
such as scaled Gauss--Legendre, scaled Gauss--Chebychev, and scaled Clenshaw--Curtis quadratures.

We will apply the results from \cref{sec:main}.
Let $\xi_1,\dots,\xi_n$ be the zeros of the ultraspherical polynomial of degree $n$, with each multiplied by a common scaling factor $T>0$. 
Then, since the quadrature nodes for the Gauss--Jacobi quadrature are distinct, they satisfy \eqref{eq:distinct-ordered}:
\[ -\infty<\xi_{1}<\xi_{2}<\dots<\xi_n<\infty,\]
and moreover, without the scaling $T$ they lie in $(-1,1)$, and thus upon scaling they lie in $(-T,T)$; see for example \cite[Theorem 3.3.1]{Szego1975}.
By denoting $\xi_i=T\cos \theta_i$ for $i=1,\ldots,n,$ we have
\[ 0 <\theta_n<\theta_{n-1}<\cdots<\theta_1<\pi,\]
which satisfy the antithetic property $\theta_i+\theta_{n-i+1}=\pi$ for all $i$. 
Therefore, we have
\[ \xi_i=T\cos\theta_i = T\cos(\pi-\theta_{n-i+1})=-T\cos\theta_{n-i+1}=-\xi_{n-i+1},\]
for all $i$.
For $-1/2\leq \alpha\leq 1/2$, the classical bounds for the $\theta_i$'s in \cite[Theorem~6.21.3]{Szego1975} tell us 
\begin{align}\label{eq:ultra_nodes1}
    \left( 1-\frac{i}{n+1}\right)\pi \leq \theta_i\leq \left( 1-\frac{i-1/2}{n}\right)\pi, \quad \text{for $i=1,\ldots,\lfloor n/2\rfloor$,}
\end{align}
and 
\begin{align}\label{eq:ultra_nodes2}
    \frac{i-1/2}{n}\pi \leq \theta_{n-i+1}\leq \frac{i}{n+1}\pi, \quad \text{for $i=1,\ldots,\lfloor n/2\rfloor$,}
\end{align}
For $n$ odd, it holds that $\theta_{(n+1)/2}=\pi/2$ and $\xi_{(n+1)/2}=0.$

We obtain the following lower bounds on the worst-case error in $H^{\infty}(\Dcal_{d},\omega)$ for any quadrature rules with nodes given by the zeros of the scaled ultraspherical polynomials.
Note that our results and the universal lower bounds (\eqref{eq:sugihara_lower_SE} and \eqref{eq:sugihara_lower_DE}) are generally not comparable. More precisely, when we set the scaling factor $T$ by following \eqref{eq:scaling_factor}, our results are tighter when $L$ is large, while the opposite holds when $L$ is small. Optimizing $L$ in terms of upper bounds on the worst-case error, as well as discussing which lower estimate is the sharpest, goes beyond the scope of this paper.

\begin{corollary}\label{cor:ultraspherical}
    For $n\geq 4,$ let $A_n$ be the Gauss--Legendre quadrature scaled from $[-1,1]$ to $[-T,T]$ with
    \[ T=\begin{cases} Ln^{1/(\rho+1)} & \text{for $\omega=\omega_{\SE}$,} \\ L\ln n & \text{for $\omega=\omega_{\DE}$,} \end{cases}\]
    for a fixed $L$ (cf.\ \eqref{eq:scaling_factor}).
    Then there exists a constant $c>0$, independent of $n$, such that $e^{\wor}(A_n, H^{\infty}(\Dcal_{d},\omega))$ is bounded below by $\Omega(\exp(-cn^{\rho/(\rho+1)})/n^{5\rho/(\rho+1)})$ for the case $\omega=\omega_{\SE}$ and $\Omega(\exp(-cn/\ln n)/(n/\ln n)^5)$ for the case $\omega=\omega_{\DE}$, respectively. 
    The same lower bound with different constants holds for the scaled Clenshaw--Curtis quadrature $A_n$ for $n\geq 6$.
\end{corollary}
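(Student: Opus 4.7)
The plan is to apply Theorem~\ref{thm:lower_estimate} and retain only one favourable term from the sum in \eqref{eq:gen-lb-2}---namely, the term indexed by the pair of central nodes. By the symmetry $\xi_i = -\xi_{n-i+1}$, the nodes closest to the origin have small modulus, and since in both the SE and DE regimes the scaling factor satisfies $T/n\to 0$, the central spacing $\xi_{i^\star+1}-\xi_{i^\star}$ is also small. This observation produces the polynomial prefactor of the claimed lower bound, provided the tanh product can be shown to be bounded below by a suitable exponential in $n/T$.

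Concretely, I would set $i^\star := \lfloor n/2\rfloor$ and use the Szegő bounds \eqref{eq:ultra_nodes1}--\eqref{eq:ultra_nodes2} for Gauss--Legendre (together with the exact $\theta_j = (n-j)\pi/(n-1)$ for Clenshaw--Curtis) to establish
\[ \theta_{i^\star},\,\theta_{i^\star+1}\in\bigl[\pi/2 - c_1/n,\,\pi/2 + c_1/n\bigr], \]
from which $\max(|\xi_{i^\star}|,|\xi_{i^\star+1}|)\leq c_2 T/n$ and $\xi_{i^\star+1}-\xi_{i^\star}\geq c_3 T/n$ follow for constants $c_1,c_2,c_3>0$ independent of $n$. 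Since $T/n\to 0$ as $n\to\infty$, the weight factor $\omega(\max(|\xi_{i^\star}|,|\xi_{i^\star+1}|))$ in Theorem~\ref{thm:lower_estimate} is bounded below by a positive constant, and the geometric factor $(\xi_{i^\star+1}-\xi_{i^\star})\min\{1,(\pi(\xi_{i^\star+1}-\xi_{i^\star})/(4d))^4\}$ contributes precisely $(T/n)^5$. Upon substituting $T = Ln^{1/(\rho+1)}$ (SE) or $T=L\ln n$ (DE), this gives the polynomial prefactors $n^{-5\rho/(\rho+1)}$ and $(\ln n/n)^5$ stated in the corollary.

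The main obstacle is to prove a lower bound of the form
\[ \prod_{j=1}^{i^\star-1}\tanh^2\!\Bigl(\tfrac{\pi(\xi_{i^\star}-\xi_j)}{4d}\Bigr)\prod_{j'=i^\star+2}^{n}\tanh^2\!\Bigl(\tfrac{\pi(\xi_{j'}-\xi_{i^\star+1})}{4d}\Bigr)\;\geq\;\exp(-Cn/T) \]
for some $C>0$ depending only on $d$. The integral-comparison trick used in the proof of Theorem~\ref{thm:lower_estimate1} does not apply directly, since the inter-node spacing here is far from uniform and in fact degenerates near $\pm T$. I would instead pass to the $\theta$-coordinates, in which the nodes are quasi-uniformly spaced with gap of order $\pi/n$, and use $\xi_{i^\star}-\xi_j = T(\cos\theta_{i^\star}-\cos\theta_j)$ to rewrite the logarithm of the product as a Riemann sum approximating
\[ \frac{n}{\pi}\int_{-T}^{T}\frac{-\ln\tanh\!\bigl(\pi|\xi_{i^\star}-x|/(4d)\bigr)}{\sqrt{T^2-x^2}}\,dx. \]
Since $\xi_{i^\star}$ is close to zero, one bounds $\sqrt{T^2-x^2}\geq T/2$ on a bulk subinterval; substituting $u = \pi|\xi_{i^\star}-x|/(4d)$ reduces the bulk contribution to a rescaled copy of $\int_0^\infty(-\ln\tanh u)\,du = \pi^2/8$ and yields the desired bound of order $n/T$. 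The contribution from $x$ near $\pm T$ is of order $ne^{-\pi T/(2d)}$ and therefore negligible, while the discreteness error in the Riemann comparison is handled by the monotonicity of $-\ln\tanh$ together with the integrability of its (at most logarithmic) singularity at $u=0$, so that the one or two terms adjacent to $i^\star$ contribute only an $O(1)$ correction. Plugging the resulting product bound into Theorem~\ref{thm:lower_estimate} and specialising $T$ to the two regimes gives the corollary for Gauss--Legendre; the Clenshaw--Curtis case follows along the same lines once $n\geq 6$, since its Chebyshev--Lobatto nodes enjoy the same symmetry and exactly uniform $\theta$-spacing $\pi/(n-1)$, so only the constants change.
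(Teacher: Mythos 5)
Your proposal follows the same skeleton as the paper's proof: apply Theorem~\ref{thm:lower_estimate}, keep only the single term $i^\star=\lfloor n/2\rfloor$, use the Szeg\H{o} bounds \eqref{eq:ultra_nodes1}--\eqref{eq:ultra_nodes2} to show the central weight factor is bounded below by a constant and the geometric factor contributes $(T/n)^5$, and then bound the remaining tanh product below by $\exp(-Cn/T)$. Where you differ is only in the last step: the paper stays elementary, writing $\xi_{i^\star}-\xi_j=2T\sin\frac{\theta_{i^\star}+\theta_j}{2}\sin\frac{\theta_j-\theta_{i^\star}}{2}$, bounding the first sine below by a constant and the second linearly in the index, thereby reducing to a product over a uniform grid which is compared with $\int_0^\infty\ln|\tanh(\cdot)|$ via a rectangle argument; you instead compare the log-sum with the arcsine-density integral $\frac{n}{\pi}\int_{-T}^{T}\bigl(-\ln\tanh(\pi|\xi_{i^\star}-x|/(4d))\bigr)/\sqrt{T^2-x^2}\,\rd x$ and split into bulk and edge. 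Your route is viable---the quasi-uniform $\theta$-spacing with gaps $\geq c/n$ follows from the same Szeg\H{o} bounds, monotonicity gives the sum--integral comparison, the bulk gives $O(nd/T)$ and the edge $O(n\rme^{-cT})=o(n/T)$---and is essentially the same idea phrased in equilibrium-measure language.

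There is, however, one wrong intermediate claim: the terms adjacent to $i^\star$ (namely $j=i^\star-1$ and $j'=i^\star+2$) do \emph{not} contribute an $O(1)$ correction. Their arguments are of size $\xi_{i^\star}-\xi_{i^\star-1}\asymp T/n\to 0$, so $-\ln\tanh\bigl(\pi(\xi_{i^\star}-\xi_{i^\star-1})/(4d)\bigr)=\ln(n/T)+O(1)\to\infty$; the integrability of the singularity of $-\ln\tanh$ at $0$ does not make its closest sampled value bounded. The fix is easy and does not change your architecture: either observe that each adjacent term contributes a factor $\tanh^2(\cdot)\geq (cT/n)^2$, i.e.\ an extra polynomial factor $(T/n)^4$ overall, or bound it as the paper does via $\ln|\tanh(at)|\geq-\pi^2/(8at)$, which folds it into the exponential. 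Either way the corollary still follows, because $\ln(n/T)=o(n/T)$ in both the SE and DE regimes, so the correction is absorbed by enlarging the unspecified constant $c$: for instance $(T/n)^9\rme^{-Cn/T}=\Omega\bigl((T/n)^5\rme^{-cn/T}\bigr)$ for any $c>C$. So the proof is correct in structure and conclusion, but the ``$O(1)$'' step as stated is false and must be replaced by one of these repairs.
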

\begin{proof}
    We show the result for the scaled Gauss--Legendre quadrature only. 
    The result for the scaled Clenshaw--Curtis quadrature follows by the same arguments.
    From Theorem~\ref{thm:lower_estimate}, it holds for any $A_n$ that
    \begin{align*}
    & e^{\wor}(A_n, H^{\infty}(\Dcal_{d},\omega)) \\
    & \geq \frac{(\tanh(1))^{4}}{30}\sum_{i=1}^{n-1}\omega(\max(|\xi_{i}|,|\xi_{i+1}|))(\xi_{i+1}-\xi_{i})\min\biggl\{1,\biggl(\frac{\pi(\xi_{i+1}-\xi_{i})}{4d}\biggr)^{4}\biggr\} \\
    & \quad \qquad \times \exp\left(2\sum_{j=1}^{i-1}\ln\left|\tanh\left(\frac{\pi}{4d}(\xi_{i}-\xi_{j})\right)\right|+2\sum_{j'=i+2}^{n}\ln\left|\tanh\left(\frac{\pi}{4d}(\xi_{i+1}-\xi_{j'})\right)\right|\right).
    \end{align*}
    
    Let us consider the term with $i=\lfloor n/2\rfloor$ from above. For $j=1,\ldots,\lfloor n/2\rfloor-1$, it holds that
    \[ \xi_{\lfloor n/2\rfloor}-\xi_{j} = T\cos \theta_{\lfloor n/2\rfloor}-T\cos \theta_{j}=2T\sin\frac{\theta_{\lfloor n/2\rfloor}+\theta_{j}}{2}\sin\frac{\theta_{j}-\theta_{\lfloor n/2\rfloor}}{2}. \]
    By using \eqref{eq:ultra_nodes1}, we have
    \[ \frac{\theta_{\lfloor n/2\rfloor}+\theta_{j}}{2}\geq \left( 1-\frac{\lfloor n/2\rfloor+j}{2(n+1)}\right)\pi \geq \left( 1-\frac{n}{2(n+1)}\right)\pi \geq \frac{\pi}{2}, \]
    and
    \[ \frac{\theta_{\lfloor n/2\rfloor}+\theta_{j}}{2}\leq \left( 1-\frac{\lfloor n/2\rfloor+j-1}{2n}\right)\pi\leq \left( 1-\frac{(n-1)/2}{2n}\right)\pi=\left( \frac{3}{4}+\frac{1}{4n}\right)\pi\leq \frac{7\pi}{8}, \]
    so that
    \[ \sin\frac{\theta_{\lfloor n/2\rfloor}+\theta_{j}}{2}\geq \sin\frac{7\pi}{8}=\frac{\sqrt{2-\sqrt{2}}}{2}. \]
    Similarly, it follows from \eqref{eq:ultra_nodes1} that
    \[ \frac{\theta_{j}-\theta_{\lfloor n/2\rfloor}}{2}\geq \left(\frac{\lfloor n/2\rfloor-1/2}{n}-\frac{j}{n+1}\right)\frac{\pi}{2}\geq \frac{\lfloor n/2\rfloor-1/2-j}{n}\cdot \frac{\pi}{2}>0,\]
    and
    \[ \frac{\theta_{j}-\theta_{\lfloor n/2\rfloor}}{2}\leq \left(\frac{\lfloor n/2\rfloor}{n+1}-\frac{j-1/2}{n}\right)\frac{\pi}{2}\leq \frac{(n-1)/2}{n+1}\cdot\frac{\pi}{2}\leq \frac{\pi}{4}.\]
    Using these results, the first inner sum appearing in the exponential term with $i=\lfloor n/2\rfloor$ is bounded below by
    \begin{align*}
        & \sum_{j=1}^{\lfloor n/2\rfloor-1}\ln\left|\tanh\left(\frac{\pi}{4d}(\xi_{\lfloor n/2\rfloor}-\xi_{j})\right)\right| \\
        & = \sum_{j=1}^{\lfloor n/2\rfloor-1}\ln\left|\tanh\left(\frac{\pi}{4d}2T\sin\frac{\theta_{\lfloor n/2\rfloor}+\theta_{j}}{2}\sin\frac{\theta_{j}-\theta_{\lfloor n/2\rfloor}}{2}\right)\right|\\
        & \geq \sum_{j=1}^{\lfloor n/2\rfloor-1}\ln\left|\tanh\left(\frac{T\pi\sqrt{2-\sqrt{2}}}{4d}\sin\left(\frac{\lfloor n/2\rfloor-1/2-j}{n}\cdot \frac{\pi}{2}\right)\right)\right|\\
        & = \sum_{j=1}^{\lfloor n/2\rfloor-1}\ln\left|\tanh\left(\frac{T\pi\sqrt{2-\sqrt{2}}}{4d}\sin\left(\frac{j-1/2}{n}\cdot \frac{\pi}{2}\right)\right)\right|\\
        & \geq \sum_{j=1}^{\lfloor n/2\rfloor-1}\ln\left|\tanh\left(\frac{T\pi\sqrt{2-\sqrt{2}}}{4d}\cdot \frac{j-1/2}{n}\right)\right|.
    \end{align*}
    For $n$ even, regarding the second inner sum appearing in the same exponential term, the symmetry of $\xi_i$'s gives
    \begin{align*}
        & \sum_{j'=\lfloor n/2\rfloor+2}^{n}\ln\left|\tanh\left(\frac{\pi}{4d}(\xi_{\lfloor n/2\rfloor+1}-\xi_{j'})\right)\right| \\
        & = \sum_{j'= 1}^{n/2-1}\ln\left|\tanh\left(\frac{\pi}{4d}(\xi_{n/2}-\xi_{j'})\right)\right|\geq \sum_{j=1}^{\lfloor n/2\rfloor-1}\ln\left|\tanh\left(\frac{T\pi\sqrt{2-\sqrt{2}}}{4d}\cdot \frac{j-1/2}{n}\right)\right|.
    \end{align*}
    For $n$ odd, as we know $\xi_{(n+1)/2}=0$, it follows that
    \begin{align*}
        & \sum_{j'=\lfloor n/2\rfloor+2}^{n}\ln\left|\tanh\left(\frac{\pi}{4d}(\xi_{\lfloor n/2\rfloor+1}-\xi_{j'})\right)\right| \\
        & = \sum_{j'= 1}^{(n-1)/2}\ln\left|\tanh\left(\frac{\pi}{4d}\xi_{j'}\right)\right|=\sum_{j'= 1}^{(n-1)/2}\ln\left|\tanh\left(\frac{T\pi}{4d}\cos \theta_{j'}\right)\right|\\
        & \geq \sum_{j'= 1}^{(n-1)/2}\ln\left|\tanh\left(\frac{T\pi}{4d}\cos \frac{\pi j'}{n+1} \right)\right|\geq \sum_{j'= 1}^{(n-1)/2}\ln\left|\tanh\left(\frac{T\pi}{4d}\left(1-\frac{2 j'}{n+1}\right) \right)\right|\\
        & = \sum_{j'= 1}^{(n-1)/2}\ln\left|\tanh\left(\frac{T\pi j'}{2d(n+1)} \right)\right|\geq \sum_{j'= 1}^{(n-1)/2}\ln\left|\tanh\left(\frac{T\pi\sqrt{2-\sqrt{2}}}{4d}\cdot \frac{j'-1/2}{n}\right)\right|.
    \end{align*}

    Thus, for $n$ either even or odd, we have
    \begin{align*}
        & \sum_{j=1}^{\lfloor n/2\rfloor-1}\ln\left|\tanh\left(\frac{\pi}{4d}(\xi_{\lfloor n/2\rfloor}-\xi_{j})\right)\right|+\sum_{j'=\lfloor n/2\rfloor+2}^{n}\ln\left|\tanh\left(\frac{\pi}{4d}(\xi_{\lfloor n/2\rfloor+1}-\xi_{j'})\right)\right| \\
        & \geq 2\sum_{j=1}^{\lfloor n/2\rfloor}\ln\left|\tanh\left(\frac{T\pi\sqrt{2-\sqrt{2}}}{4d}\cdot \frac{j-1/2}{n}\right)\right|\\
        & \geq 2\left(\ln\left|\tanh\left(\frac{T\pi\sqrt{2-\sqrt{2}}}{8dn}\right)\right|+\int_{0}^{\infty}\ln\left|\tanh\left(\frac{T\pi\sqrt{2-\sqrt{2}}}{4d}\cdot \frac{x}{n}\right)\right|\rd x\right)\\
        & \geq 2\left(-\frac{n\pi d}{T\sqrt{2-\sqrt{2}}}-\frac{n\pi d}{T2\sqrt{2-\sqrt{2}}}\right)=-\frac{3n\pi d}{T\sqrt{2-\sqrt{2}}}.
    \end{align*}
    Here the last inequality follows from 
    \[ t\, \ln \left|\tanh(a t)\right| \geq \int_0^{\infty}\ln \left|\tanh(a t)\right|\rd t=-\frac{\pi^2}{8a},\]
    which holds for any $t,a>0$ and is obtained by noticing that $\ln |\tanh(x)|<0$ for any $x$ and that the left-hand side represents the signed area of the rectangle $[0,t]\times [\ln |\tanh(at)|,0].$
    
    Note that, for even $n$,
    \begin{align*}
    \xi_{\lfloor n/2\rfloor+1}-\xi_{\lfloor n/2\rfloor} & = 2\xi_{n/2+1} = 2T\cos \theta_{n/2+1} \\
    & \geq 2T\cos \frac{n\pi}{2(n+1)} = 2T\sin \frac{\pi}{2(n+1)}\geq \frac{2T}{n+1},
    \end{align*}
    whereas, for odd $n$,
    \begin{align*}
    \xi_{\lfloor n/2\rfloor+1}-\xi_{\lfloor n/2\rfloor} & = -\xi_{(n-1)/2} = -T\cos \theta_{(n-1)/2} \\
    & \geq T\cos \frac{(n-1)\pi}{2(n+1)} = T\sin \frac{\pi}{n+1}\geq \frac{2T}{n+1}.
    \end{align*}
    
    Altogether we obtain
\begin{align*}
    e^{\wor}(A_n, H^{\infty}(\Dcal_{d},\omega)) & \geq \frac{(\tanh(1))^{4}}{30}\omega(|\xi_{\lfloor n/2\rfloor}|) \frac{2T}{n+1}\min\biggl\{1,\biggl(\frac{\pi T}{2d(n+1)}\biggr)^{4}\biggr\} \\
    & \quad \times \exp\left(-\frac{3n\pi d}{T\sqrt{2-\sqrt{2}}}\right).
\end{align*}
Then our choice of the scaling factor $T$ obviously leads to the claimed forms of the lower bounds on $e^{\wor}(A_n, H^{\infty}(\Dcal_{d},\omega))$ for both the SE and DE cases, respectively.

The $n$-point Clenshaw--Curtis quadrature $A_n$ uses the zeros of the degree-$(n-2)$ Chebyshev polynomials of the second kind, in addition to the endpoints. 
Hence, for $n \geq 6$, the argument discussed above is applicable to the ($n-2$) zeros in the middle, instead of $n$ zeros, and thus the lower bound is identical up to a constant.
\end{proof}

\begin{remark}\label{rem:ultraspherical}
    It is known from \cite[Theorem~6.3.3]{Szego1975} that, for any $-1/2\leq \alpha\leq 1/2$, we have
    \[ \theta_n-\theta_{n-1}\leq \theta_{n-1}-\theta_{n-2}\leq \cdots\leq \theta_{\lfloor n/2\rfloor+1}-\theta_{\lfloor n/2\rfloor}.\]
    Because of the antithetic property $\theta_i+\theta_{n-i+1}=\pi$ and the concavity of the cosine function on the interval $[0,\pi/2]$, it holds that
    \begin{align*}
        \xi_{\min} & = \xi_{n}-\xi_{n-1}=T\left( \cos\theta_n-\cos\theta_{n-1}\right)\\
        & \geq T\left( \cos\frac{\pi}{n+1}-\cos\frac{3\pi}{2n}\right) \\
        & \geq T\left( 1-\frac{1}{2}\left(\frac{\pi}{n+1}\right)^2-1+\frac{4}{\pi^2}\left(\frac{3\pi}{2n}\right)^2\right)\geq \left(9-\frac{\pi^2}{2}\right)\frac{T}{n^2},
    \end{align*}
    for any $n\geq 3$, where, from the second to third lines above, we have used the inequality $1-x^2/2\leq \cos x\leq 1-4x^2/\pi^2$ for any $0\leq x\leq \pi/2$.
    If we apply Theorem~\ref{thm:lower_estimate1}, instead of Theorem~\ref{thm:lower_estimate}, under the same choice of $T$ as in Corollary~\ref{cor:ultraspherical}, the main exponential terms of the lower bounds on $e^{\wor}(A_n, H^{\infty}(\Dcal_{d},\omega))$ result in $\rme^{-c n^{(2\rho+1)/(\rho+1)}}$ for the SE case and $\rme^{-c n^2/\ln n}$ for the DE case, respectively, for some constant $c>0$. This way, it can be confirmed that Theorem~\ref{thm:lower_estimate} leads to sharper estimates for quadrature formulas given by the ultraspherical polynomials. 
\end{remark}

\subsection{Gauss--Hermite quadrature}
The Gauss--Hermite quadrature for $I(f)$ is given by
\[ A_n(f)=\sum_{i=1}^n f(\xi_i)\rme^{{\xi_i}^2}w_i \approx \int_{-\infty}^{\infty} f(x)\rme^{x^2}\rme^{-x^2}\rd x = \int_{-\infty}^{\infty}f(x)\rd x=I(f),\]
where $\xi_{1},\ldots,\xi_{n}$ denote the Hermite nodes, i.e., the zeros of the physicist's Hermite polynomial of degree $n$, and the coefficients $w_1,\ldots,w_n$ are determined so that $A_n$ is exact for integrating polynomials of degree up to $2n-1$ with respect to $\mathrm{e}^{-x^2}\mathrm{d}x$. 

We use the same notation as in \eqref{eq:distinct-ordered} for Gauss--Hermite quadrature points:
\[ -\infty<\xi_{1}<\xi_{2}<\dots<\xi_n<\infty. \]
These nodes are known to satisfy the following properties, which we use to apply results from the previous section; see for example \cite[Section~6.31]{Szego1975}.
They are symmetric, i.e., $\xi_i=-\xi_{n-i+1}$ for all $i=1,\ldots,n$. For $n$ odd, $\xi_{(n-1)/2}=0$ and
\[ \frac{i\pi}{\sqrt{2n+1}}<\xi_{(n-1)/2+i}<\frac{4i+3}{\sqrt{2n+1}},\quad \text{for $i=1,\ldots,(n-1)/2$}\]
holds while 
for $n$ even,
\[ \frac{(i-1/2)\pi}{\sqrt{2n+1}}<\xi_{n/2+i}<\frac{4i+1}{\sqrt{2n+1}},\quad \text{for $i=1,\ldots,n/2$}\]
holds.
Moreover, for $n$ either odd or even, the minimum separation distance is bounded above and below by
\begin{align}\label{eq:xi_min_GH}
    \frac{\pi}{\sqrt{2n+1}}<\xi_{\min}\leq \frac{\sqrt{21/2}}{\sqrt{2n+1}}.
\end{align}

Applying Theorem~\ref{thm:lower_estimate1}, we obtain the following result.
\begin{corollary}\label{cor:gauss-hermite-general}
For any $d>0$
and $n\geq2$, the worst-case error of the Gauss--Hermite quadrature
in $H^{\infty}(\Dcal_{d},\omega)$ is bounded below by 
\[
c\,\frac{\exp\left(-2d\sqrt{2n+1}\right)}{(n+1/2)^{2}},
\]
for some constant $c>0$ that is independent of $n$, regardless of whether $\omega=\omega_{\SE}$ or $\omega=\omega_{\DE}$.
In particular,
for any constant $C>2\sqrt{2}d$, the worst-case error cannot decay
faster than $\exp(-C\sqrt{n})$. \end{corollary}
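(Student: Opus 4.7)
The plan is to apply Theorem~\ref{thm:lower_estimate1}, which is well-suited to quadrature families whose nodes cluster near the origin---exactly the case of Gauss--Hermite. Three quantities must be controlled: the factor $\exp(-2\pi d/\xi_{\min})$, the polynomial factor $\xi_{\min}\min\{1,(\pi\xi_{\min}/(4d))^4\}$, and the sum $\sum_{i=1}^{n-1}\omega(\max(|\xi_i|,|\xi_{i+1}|))$.

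The first two quantities are handled using the minimum-separation bounds in \eqref{eq:xi_min_GH}. The lower bound $\xi_{\min}>\pi/\sqrt{2n+1}$ immediately gives $\exp(-2\pi d/\xi_{\min})\geq \exp(-2d\sqrt{2n+1})$. The upper bound $\xi_{\min}\leq \sqrt{21/2}/\sqrt{2n+1}$ ensures that $\pi\xi_{\min}/(4d)\leq 1$ for $n$ large enough (depending only on $d$), so the minimum inside the theorem is realized by the quartic. Using $\xi_{\min}\geq \pi/\sqrt{2n+1}$ again, $\xi_{\min}\min\{1,(\pi\xi_{\min}/(4d))^4\}=\pi^4\xi_{\min}^5/(4d)^4\gtrsim (2n+1)^{-5/2}$, with a constant depending only on $d$.

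The third quantity is bounded from below by exploiting the clustering of Hermite nodes. Using the explicit node bounds stated just before the corollary (e.g.\ $\xi_{(n-1)/2+i}<(4i+3)/\sqrt{2n+1}$ in the odd case, and the analogue for even $n$), the number of nodes $\xi_j$ lying in a fixed bounded window around the origin, say $[-1,1]$, is at least of order $\sqrt{n}$, uniformly in the parity of $n$. On such a window, both SE- and DE-type weights are bounded below by the positive constant $\omega(1)$ (since $\omega$ is continuous and positive), and hence $\sum_{i=1}^{n-1}\omega(\max(|\xi_i|,|\xi_{i+1}|))\geq c'\sqrt{n}$ for some $c'>0$ independent of $n$ and of the choice between SE and DE.

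Multiplying these three estimates yields a lower bound of order $\sqrt{n}\,(2n+1)^{-5/2}\exp(-2d\sqrt{2n+1})$, which is indeed of order $(n+1/2)^{-2}\exp(-2d\sqrt{2n+1})$, and the finitely many small-$n$ cases are absorbed by reducing the constant $c$. For the ``in particular'' claim, I would pick any $C_0\in(2\sqrt{2}d,C)$: since $2d\sqrt{2n+1}=2\sqrt{2}d\sqrt{n+1/2}\leq C_0\sqrt{n}$ for $n$ large, the polynomial factor $(n+1/2)^{-2}$ is dominated by the surplus $\exp(-(C-C_0)\sqrt{n})$, so the lower bound dominates $\exp(-C\sqrt{n})$ eventually. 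I do not anticipate a major obstacle; the main care is in tracking constants in the polynomial factor and confirming that the node-counting argument works uniformly for both parities of $n$.
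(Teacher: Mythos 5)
Your proposal is correct and takes essentially the same route as the paper's proof: apply Theorem~\ref{thm:lower_estimate1}, use the separation bounds \eqref{eq:xi_min_GH} to control the exponential factor $\exp(-2\pi d/\xi_{\min})\geq\exp(-2d\sqrt{2n+1})$ and the polynomial factor of order $(2n+1)^{-5/2}$, and show that the weight sum is of order $\sqrt{n}$ by exploiting the clustering of the Hermite nodes near the origin. The only difference is cosmetic: the paper bounds the sum via a sum-to-integral comparison and an argument borrowed from \cite{KSG22}, whereas you count the order-$\sqrt{n}$ nodes in a fixed window $[-1,1]$ and use $\omega\geq\omega(1)$ there---both give the same $c'\sqrt{n}$ estimate, and your version is self-contained.
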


\begin{proof} From Theorem~\ref{thm:lower_estimate1} and \eqref{eq:xi_min_GH},
for any weight function $\omega$ the worst-case error of the Gauss--Hermite
quadrature is bounded below by 
\begin{align*}
 & \frac{(\tanh(1))^{4}}{30}\frac{\pi}{\sqrt{2n+1}}\min\biggl\{1,\biggl(\frac{\pi^{2}}{4d\sqrt{2n+1}}\biggr)^{4}\biggr\}\exp\left(-2d\sqrt{2n+1}\right)\\
 & \qquad\qquad\qquad\times\sum_{i=1}^{n-1}\omega(\max(|\xi_{i}|,|\xi_{i+1}|)).
\end{align*}
We will bound $\sum_{i=1}^{n-1}\omega(\max(|\xi_{i}|,|\xi_{i+1}|))$
from below. 

We first consider the case $H^{\infty}(\Dcal_{d},\omega_{\SE})$ with
$\beta=1$ and $\rho=2$, for which we can apply the bound we obtained
in \cite[Proof of Theorem~3.2]{KSG22}. Following \cite[Proof of Theorem~3.2]{KSG22},
it can be shown that there exists a constant $c'>0$, independent
of $n$, such that 
\[
\sum_{i=1}^{n-1}\omega_{\mathrm{SE}}(\max(|\xi_{i}|,|\xi_{i+1}|))=\sum_{i=1}^{n-1}\rme^{-\max(\xi_{i}^{2},\xi_{i+1}^{2})}\geq c'\sqrt{\pi(2n+1)}.
\]
Hence, we obtain the lower bound
\[
c'\frac{(\tanh(1))^{4}\pi^{3/2}}{30}\min\biggl\{1,\biggl(\frac{\pi^{2}}{4d\sqrt{2n+1}}\biggr)^{4}\biggr\}\exp\left(-2d\sqrt{2n+1}\right).
\]
This proves the statement for $\beta=1$ and $\rho=2$. 

The proof is analogous for other cases. For $n$ odd, the symmetry
of $\xi_{i}$'s, the fact $\xi_{(n-1)/2}=0$ and the monotonicity
of $\omega$ yield 
\begin{align*}
\sum_{i=1}^{n-1}\omega(\max(|\xi_{i}|,|\xi_{i+1}|)) & =2\sum_{i=1}^{(n-1)/2}\omega(\xi_{(n-1)/2+i})\geq2\int_{1}^{(n+1)/2}\omega\left(\frac{4x+3}{\sqrt{2n+1}}\right)\rd x\\
 & =\frac{\sqrt{2n+1}}{2}\int_{7/\sqrt{2n+1}}^{(2n+5)/\sqrt{2n+1}}\omega(y)\rd y,
\end{align*}
whereas, for $n$ even, it holds that 
\begin{align*}
\sum_{i=1}^{n-1}\omega(\max(|\xi_{i}|,|\xi_{i+1}|)) & =\omega(\xi_{n/2+1})+2\sum_{i=2}^{n/2}\omega(\xi_{n/2+i})\geq\int_{1}^{n/2+1}\omega\left(\frac{4x+1}{\sqrt{2n+1}}\right)\rd x\\
 & =\frac{\sqrt{2n+1}}{4}\int_{5/\sqrt{2n+1}}^{(2n+5)/\sqrt{2n+1}}\omega(y)\rd y.
\end{align*}
Then, an argument similar to the one made in \cite[Proof of Theorem~3.2]{KSG22}
reveals that these integrals are bounded below by a constant
$c'>0$ that depends only on $\alpha_{1},\beta,\rho$ for the SE case,
and only on $\alpha_{1},\beta_{1},\gamma$ for the DE case. 
This
concludes that the lower bound on the worst-case error given in Corollary~\ref{cor:gauss-hermite-general}
applies to these cases as well, just with different values for
$c$. 
\end{proof}

\begin{remark}
In \cref{subsec:upper_bounds}, we discussed the well-known Gauss–Hermite error decay rate of $O(-c\sqrt{n})$ for some constant $c > 0$ for analytic functions. 
Corollary~\ref{cor:gauss-hermite-general} states that even if the integrand decays as fast as $\omega_{\DE}$, the Gauss–Hermite quadrature error cannot decay faster than $O(-c\sqrt{n})$. Hence, Corollary~\ref{cor:gauss-hermite-general} consolidates the suboptimality of Gauss–Hermite quadrature for analytic functions pointed out by Trefethen \cite{Tre22}. 
This suboptimality of Gauss-type quadrature is a stark contrast to the numerical integration on bounded intervals, where for 
functions that are analytic in ellipses Gauss-type quadratures are essentially optimal; see \cite{Bakhvalov.NS_1967_OptimalSpeedIntegrating,Petras.K_1998_GaussianOptimalIntegration} for more details.
\end{remark}

\bibliographystyle{siamplain}
\bibliography{references}
\end{document}